\newcommand{\N}{\mathbb{N}}
\newcommand{\R}{\mathbb{R}}
\newcommand{\abs}[1]{\left\lvert#1\right\rvert}
\newcommand{\Pp}[1]{\mathbb{P}\left(#1\right)}
\newcommand{\E}[1]{\mathbb{E}\left(#1\right)}
\DeclareMathOperator{\var}{Var}
\newcommand{\Var}[1]{\var\left(#1\right)}
\newcommand{\energy}[1]{\mathcal{E}\left(#1\right)}
\tikzstyle{my_style} = [circle, fill = black, inner sep = 0pt, minimum size = 7pt]
\newcommand{\stargraph}[1]{
    \begin{tikzpicture}
        \node[style = {my_style}] at (360:0) (center) {};
        
        \foreach \n in {1, ..., #1}{
            \node[style = {my_style}] at ({\n*360/#1 + 90}:1) (n\n) {};
            \draw (center)--(n\n);
        }
        
    \end{tikzpicture}
}
\newcommand{\pathgraph}[1]{%
    \begin{tikzpicture}
        \graph [branch right] {
            subgraph P_n [n = #1, empty nodes, nodes = {circle, fill = black, inner sep = 0pt, minimum size = 7pt}]
        };
    \end{tikzpicture}
}
\newcommand{\cyclegraph}[1]{%
    \begin{tikzpicture}
        \graph [clockwise] {
            subgraph C_n [n = #1, empty nodes, nodes = {circle, fill = black, inner sep = 0pt, minimum size = 7pt}]
        };
    \end{tikzpicture}
}
\theoremstyle{plain}
\newtheorem{definition}{\protect\defname}
\newtheorem{proposition}{\protect\propname}
\newtheorem{theorem}{\protect\thmaname}
\newtheorem{lemma}{\protect\lemmaname}
\newtheorem{corollary}{\protect\corname}
\newtheorem{example}{\protect\exname}
\newtheorem{remark}{\protect\rename}
\newcommand{\defname}{}\newcommand{\propname}{}\newcommand{\thmaname}{}\newcommand{\lemmaname}{}\newcommand{\corname}{}\newcommand{\exname}{}\newcommand{\rename}{} 
    \renewcommand{\defname}{Definition}
    \renewcommand{\propname}{Proposition}
    \renewcommand{\thmaname}{Theorem}%
    \renewcommand{\lemmaname}{Lemma}
    \renewcommand{\corname}{Corollary}
    \renewcommand{\exname}{Example}
    \renewcommand{\rename}{Remark}%
    \renewcommand{\defname}{Definici\'on}
    \renewcommand{\propname}{Proposici\'on}
    \renewcommand{\thmaname}{Teorema}%
    \renewcommand{\lemmaname}{Lema}
    \renewcommand{\corname}{Corolario}
    \renewcommand{\exname}{Ejemplo}\renewcommand{\rename}{Observaci\'on}%
\title{Bound for the energy of graphs in terms of degrees and leaves}
\author{Octavio Arizmendi, Samuel Gurrola-Viramontes}
\date{\today}
\begin{document}

\selectlanguage{english}
\maketitle

\abstract{ We provide a new upper bound for the energy of graphs in terms of degrees and number of leaves. We apply this formula to study the energy of Erd\"os-R\'enyi graphs and Barabasi-Albert trees.}

~

\noindent{\textbf{\emph{Keywords}}: graph energy, degree, leaves, Erd\"os-R\'enyi graphs, Barabasi-Albert trees}
\section{Introduction}

In this paper, we focus on the concept of graph energy which is a key metric in spectral graph theory. The energy of a graph is defined as the sum of the absolute values of the eigenvalues $\lambda_1, \ldots, \lambda_n$ of its adjacency matrix, that is
\[
\mathcal{E}(G)= \sum_{i=1}^{n} |\lambda_i|.
\]

Graph energy has been a subject of increasing interest due to its connections to various fields, including chemistry, where it is used to model molecular stability, and network theory, where it provides insights into structural properties. An excellent and comprehensive introduction to the theory of graph energy can be found in the monograph \cite{Li12}, which offers a broad survey of results and methods related to this topic.

In this paper, we contribute to the study of graph energy by establishing a novel inequality that relates the energy $\mathcal{E}(G)$ of a graph to the degrees of its vertices and the number of leaves adjacent to each vertex. Our result provides a deeper understanding of how local structural features, such as vertex degree and adjacency to leaves, influence the global spectral properties of the graph.

To be more precise, we prove the following inequality holds for any graph \(G = (V,E)\),
\[
    \mathcal{E}(G) \leq 2e_{11} + \sum_{v \in V'} \sqrt{3l(v) + d(v)},
\]
where $e_{11}$ is the numbers of edges joining two leaves, $l(v)$ denotes the number of leaves connected to the vertex $v$ and \(V'\) denotes the set of inner vertices of \(G\).

This inequality improves some of the previous upper bounds as we explain in Section \ref{result_section} and it is specially suitable for trees, since in many cases, the number of leaves is large.  

Despite the various papers proving inequalities for the energy and properties for general graphs,  research exploring graph energy in the context of random graphs remains relatively sparse. In particular, random trees present specific challenges as distribution of eigenvalues is not well understood, and, consequently, the asymptotic behavior of the energy is hard to tackle. One of the motivations to derive the above inequality is to provide a closer bound for Barab\'asi-Albert Tree and  sparse Erd\"os-Renyi graphs.

In Section \ref{BA_section}, we apply this inequality to improve the upper bound for the energy of a Barab\'asi-Albert tree and an Erd\"os-Renyi graph.

\section{Preliminaries}

\subsection{Notation and definitions on graphs}

A \textit{graph} \(G\) is an ordered pair of sets \((V(G),E(G))\) such that the elements of \(E(G)\) are unordered pairs of different elements in \(V(G)\). The elements of \(V(G)\) are called the \textit{vertices} of the graph \(G\) and the elements of \(E(G)\) are called its \textit{edges}. To avoid ambiguities in our notation, we will always assume that \(V(G) \cap E(G) = \emptyset\). If there is no risk of confusion, we will write \(V := V(G)\) and \(E := E(G)\).

Let \(v,w\) vertices of a graph \(G\). If \(e = \{v,w\}\) is an edge of \(G\), we say that \(v,w\) are the \textit{endpoints} of the edge \(e\). We also say that \(v,w\) are \textit{neighbors}.
We denote it by \(v \sim w\). We define
\[
    N(v) := \{w \in V : v \sim w\}
\]
as the \textit{neighborhood} of \(v\). In addition, we define the \textit{degree} of \(v\) as \(d(v) := \abs{N(v)}\), i.e. the degree of \(v\) is the number of vertices that are connected to \(v\). In particular, if \(d(v) = 1\), we say that \(v\) is a \textit{leaf}.

In contrast, if a node is not a leaf nor an isolated node (with degree zero), we say that it is an \textit{inner vertex} or an \textit{inner node}. We will denote the number of inner vertices connected to a vertex \(v\) by
\[
    \delta(v) := \abs{\{w \in V : v \sim w, \, d(w) > 1\}}.
\]
For a non-empty graph \(G\) with vertices $v$ and $w$, a path from $v$ to $w$ is a sequence of vertices $v_1,\dots,v_l$, with $v_i\sim v_{i+1}$, such that  $v_1=v,v_l=w$. A graph is said to be \textit{connected} if any two vertices are linked by a path in \(G\).  If a graph is not connected, we say that it is \textit{disconnected}. We can talk about the \textit{connected components} of the graph \(G\), which naturally are the maximal connected sub-graphs of \(G\). Some examples of connected graphs will be given in the next section.

Let \(G\) be a graph with exactly \(n\) vertices. We can list the vertices of \(G\) as \(v_1, \ldots, v_n\). The \textit{adjacency matrix} \(A_G\) of \(G\), with entries \((a_{ij})\), is a matrix of size \(n \times n\) such that
\[
    a_{ij} = \begin{cases}
        1 & \mbox{if } v_i \sim v_j,\\
        0 & \mbox{otherwise.}
    \end{cases}
\]

The \textit{characteristic polynomial} of a graph \(G\) is exactly the characteristic polynomial of its adjacency matrix \(A_G\). It will be denoted by \(\phi_G\). The \textit{eigenvalues} of \(G\) are defined as the eigenvalues of \(A_G\). Notice that the eigenvalues of a graph are real numbers since the adjacency matrix of a graph is a real symmetric matrix.

In addition, we can consider a \textit{weighted graph} in the following way. Let \(G = (V, E)\) be a graph with set of vertices \(V = \{v_1, \ldots, v_n\}\). Then, we assign a \textit{weight} \(w_{ij} \in \R\) to every edge \(\{v_i,v_j\} \in E\). Since the edge \(\{v_i,v_j\}\) is the same as the edge \(\{v_j,v_i\}\), we set \(w_{ij} = w_{ji}\). If two vertices \(v_i,v_j \in V\) are not connected, we establish \(w_{ij} = 0\). In this case, we define the \textit{weight matrix} $W_G$ associated to $G$ as the matrix of size \(n \times n\) with entries \((w_{ij})_{i,j}\) and we denote the weighted graph by $(G,W_G)$. If there is no risk of confusion we will write $W := W_G$.

The characteristic polynomial and the eigenvalues of a weighted graph are defined as the characteristic polynomial and the eigenvalues of its weight matrix, respectively. The eigenvalues of a weighted graph are real numbers again. Note that a graph is a weighted graph where each of its edges has associated a weight of value one.



\subsection{Some families of graphs}
Some of the most common types of graphs are paths, stars, cycles and complete graphs. A \textit{path} is a non-empty graph \(P = (V,E)\) of the form
\[
    V = \{v_1, \ldots, v_n\}, \qquad E = \{\{v_1,v_2\}, \{v_2, v_3\}, \ldots, \{v_{n-1},v_n\}\},
\]
where \(v_1, \ldots, v_n\) are all distinct. We denote this graph by \(P_n\), where \(n\) is the number of vertices of this graph.

\begin{figure}[H]
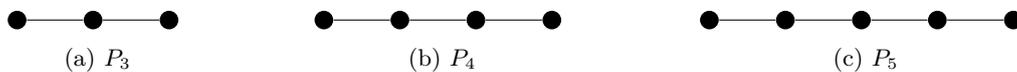

    \centering
    \begin{subfigure}[b]{0.25\textwidth}
        \centering
        \pathgraph{3}
        \caption{\(P_3\)}
    \end{subfigure}%
    \begin{subfigure}[b]{0.32\textwidth}
        \centering
        \pathgraph{4}
        \caption{\(P_4\)}
    \end{subfigure}%
    \begin{subfigure}[b]{0.38\textwidth}
        \centering
        \pathgraph{5}
        \caption{\(P_5\)}
    \end{subfigure}%
    \caption{Some examples of path graphs.}
\end{figure}

A \textit{star} is a non-empty graph \(S = (V,E)\) of the form
\[
    V = \{v_0, v_1, \ldots, v_n\}, \qquad E = \{\{v_0, v_1\}, \{v_0, v_2\}, \ldots, \{v_0, v_n\}\},
\]
where \(v_0, v_1, \ldots, v_n\) are all distinct. We denote this graph by \(S_n\), where \(n\) is the number of edges of this graph.

\begin{figure}[H]
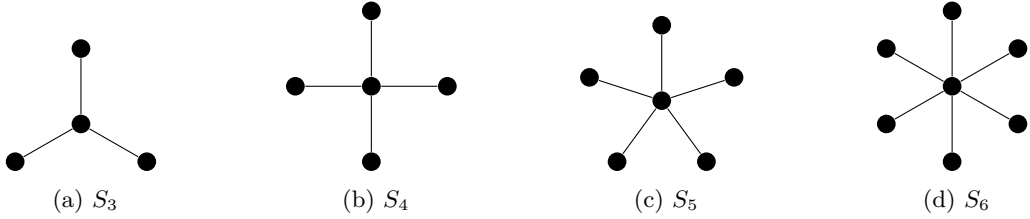

    \centering
    \begin{subfigure}[b]{0.24\textwidth}
        \centering
        \stargraph{3}
        \caption{\(S_3\)}
    \end{subfigure}%
    \begin{subfigure}[b]{0.24\textwidth}
        \centering
        \stargraph{4}
        \caption{\(S_4\)}
    \end{subfigure}%
    \begin{subfigure}[b]{0.24\textwidth}
        \centering
        \stargraph{5}
        \caption{\(S_5\)}
    \end{subfigure}%
    \begin{subfigure}[b]{0.24\textwidth}
        \centering
        \stargraph{6}
        \caption{\(S_6\)}
    \end{subfigure}
    \caption{Some examples of a star graph.}
\end{figure}

On the other hand, a \textit{cycle} is a non-empty graph \(C = (V,E)\) of the form
\[
    V = \{v_1, \ldots, v_n\}, \qquad E = \{\{v_1, v_2\}, \{v_2, v_3\}, \ldots, \{v_{n-1}, v_n\}, \{v_n, v_1\}\},
\]
where \(v_1, \ldots, v_n\) are all distinct. We denote this graph by \(C_n\), where \(n\) is the number of vertices (and edges) of this graph.

\begin{figure}[H]
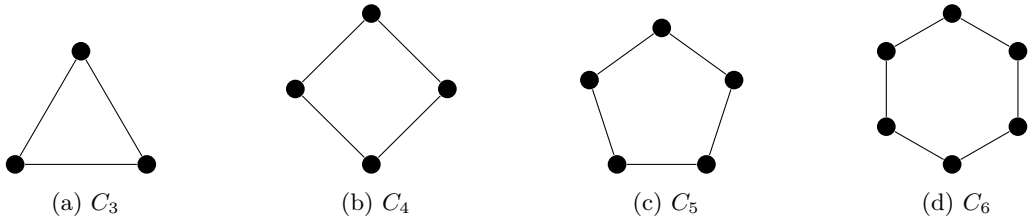

    \centering
    \begin{subfigure}[b]{0.24\textwidth}
        \centering
        \cyclegraph{3}
        \caption{\(C_3\)}
    \end{subfigure}%
    \begin{subfigure}[b]{0.24\textwidth}
        \centering
        \cyclegraph{4}
        \caption{\(C_4\)}
    \end{subfigure}%
    \begin{subfigure}[b]{0.24\textwidth}
        \centering
        \cyclegraph{5}
        \caption{\(C_5\)}
    \end{subfigure}%
    \begin{subfigure}[b]{0.24\textwidth}
        \centering
        \cyclegraph{6}
        \caption{\(C_6\)}
    \end{subfigure}
    \caption{Some examples of a cycle graphs.}
\end{figure}

Finally, a \textit{complete graph} is a non-empty graph with all the possible edges. We denote this graph by \(K_n\), where \(n\) is the number of vertices of this graph.

\subsection{Sachs' theorem for weighted graphs}

We now introduce Sach's theorem and its generalization to weighted graphs. This gives a way to write the characteristic polynomials in terms of specific graph counting. In order to do this we need now to define a special type of graphs.

\begin{definition}[Sachs' graph]
    A graph \(S\) is called a Sachs' graph if its connected components are isomorphic to \(K_2\) or to a cycle. Associated to a Sachs' graph \(S\), we can define \(r(S)\) to be the number of connected components of \(S\) and \(c(S)\) to be the number of connected components in \(S\) isomorphic to a cycle.
    
\end{definition}

As announced above, there is a close connection between this type of sub-graphs and the characteristic polynomial of a graph given by the next theorem.
\begin{theorem}[Sachs' theorem]\label{ST}
    Let \(G\) be a graph in \(n\) vertices and \(\phi_G(x) = \sum_{k = 0}^n b_k x^{n-k}\) be its characteristic polynomial. The coefficients \((b_k)_{k \geq 1}\) satisfies the equality
    \[
        b_k = \sum_{S \in \mathcal{S}_k(G)} (-1)^{r(S)}2^{c(S)},
    \]
    where \(\mathcal{S}_k(G)\) denotes the set of sub-graphs of \(G\) with exactly \(k\) vertices that are Sachs' graphs. Furthermore, \(b_0 = 1\).
    
\end{theorem}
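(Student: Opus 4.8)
The plan is to combine the permutation (Leibniz) expansion of the determinant with the standard expression of the coefficients of a characteristic polynomial in terms of principal minors. Recall that for any real $n \times n$ matrix $A$ one has $\det(xI - A) = \sum_{k=0}^{n} (-1)^k E_k(A)\, x^{n-k}$, where $E_k(A)$ denotes the sum of all $k \times k$ principal minors of $A$. Applying this to $A = A_G$ and observing that the principal submatrix of $A_G$ indexed by a vertex subset $U$ is precisely the adjacency matrix $A_{G[U]}$ of the induced subgraph, we obtain $b_k = (-1)^k \sum_{|U| = k} \det A_{G[U]}$. Hence the whole problem reduces to evaluating $\det A_H$ for an arbitrary graph $H$ on $k$ vertices.

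To compute $\det A_H = \sum_{\sigma} \sgn(\sigma) \prod_{i=1}^{k} (A_H)_{i,\sigma(i)}$, with the sum over all permutations $\sigma$ of $\{1,\dots,k\}$, I would note that a summand is nonzero exactly when $i \sim \sigma(i)$ in $H$ for every $i$; since $H$ has no loops this forces $\sigma$ to be fixed-point free. Decomposing $\sigma$ into disjoint cycles, every transposition $(i\ j)$ witnesses an edge $\{i,j\}$ of $H$ and every cycle of length $\ell \geq 3$ traces out a cycle of $H$ on those $\ell$ vertices together with a choice of orientation; the union of these edges and cycles is a spanning Sachs subgraph $S$ of $H$. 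Conversely, each spanning Sachs subgraph $S$ is obtained from exactly $2^{c(S)}$ permutations, since every $K_2$-component admits a unique transposition while every cyclic component admits exactly two orientations. Because a cycle of length $\ell$ has sign $(-1)^{\ell-1}$ and the cycle lengths of $\sigma$ sum to $k$, such a $\sigma$ has sign $(-1)^{k - r(S)}$, where $r(S)$ is the number of components of $S$. Grouping the permutations by the Sachs subgraph they determine gives $\det A_H = \sum_{S} (-1)^{k - r(S)} 2^{c(S)}$, the sum ranging over all spanning Sachs subgraphs of $H$.

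Putting the two steps together, $b_k = (-1)^k \sum_{|U| = k} \sum_{S} (-1)^{k - r(S)} 2^{c(S)} = \sum_{|U| = k} \sum_{S} (-1)^{r(S)} 2^{c(S)}$, where for each $U$ the inner sum runs over Sachs subgraphs with vertex set exactly $U$; since a Sachs subgraph on $k$ vertices determines a unique vertex set of size $k$, this double sum is precisely $\sum_{S \in \mathcal{S}_k(G)} (-1)^{r(S)} 2^{c(S)}$. Finally $b_0 = 1$ because $\phi_G(x) = \det(xI - A_G)$ is monic. The same reasoning carries over to a weight matrix $W_G$: each surviving permutation then also contributes the product of the traversed edge weights, so that a cyclic component of $S$ contributes the product of its edge weights and a $K_2$-component $\{i,j\}$ contributes $w_{ij}^2$.

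I expect the main obstacle to be the careful verification of the counting-and-sign bookkeeping in the second step: checking that the assignment from fixed-point-free permutations with nonzero weight to spanning Sachs subgraphs is well defined, that each such subgraph has exactly $2^{c(S)}$ preimages, and that $\sgn(\sigma)$ really collapses to $(-1)^{k - r(S)}$. The reduction to induced subgraphs via principal minors and the final rearrangement are routine.
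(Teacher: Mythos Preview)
Your argument is correct and is the standard Leibniz-expansion proof of Sachs' theorem: express $b_k$ via principal minors, expand each minor over permutations, identify the nonvanishing fixed-point-free permutations with oriented spanning Sachs subgraphs, and collect the $2^{c(S)}$ orientations and the sign $(-1)^{k-r(S)}$. The bookkeeping you flag as the potential obstacle is fine as written.

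As for comparison, the paper does not give its own proof of this theorem; it simply states the result and refers the reader to \cite{Cvectovik} (Cvetkovi\'c--Doob--Sachs). The argument in that reference is essentially the one you outline, so your proposal matches the intended proof.
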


The proof of this theorem can be reviewed in  \cite{Cvectovik}.
Following \cite{Mariya05}, we will consider a simple extension for the Sachs' theorem.

\begin{definition}[Sachs' sub-graph for a weighted graph]
    Let \(G\) be a weighted graph in \(n\) vertices \(v_1, \ldots, v_n\) with weight matrix \(W_G = (w_{ij})_{i,j}\). A sub-graph \(S\) of \(G\) is called a Sachs' sub-graph if its connected components are isomorphic to a weighted \(K_2\) or to a weighted cycle. Associated to \(S\), we can define the function \(W(S)\) which is the product of the weights of its connected components. The weight of a component isomorphic to a weighted \(K_2\) is \(w_{i,j}^2\) the square of the weight of its only edge \(\{v_i,v_j\}\) and the weight of a component isomorphic to a weighted cycle is the product of the weights \(w_{i,j}\) of all its edges \(\{v_i,v_j\}\). In addition, we define \(r(S)\) and \(c(S)\) to be the number of connected components of \(S\) and the number of connected components in \(S\) isomorphic to a cycle, respectively.
    
\end{definition}

\begin{theorem}[Sach's theorem for weighted graphs]
    Let \(G\) be a weighted graph in \(n\) vertices and \(\phi_G(x) = \sum_{k = 0}^n b_k x^{n-k}\) its characteristic polynomial. The coefficients \((b_k)_{k \geq 1}\) satisfies the equality
    \[
        b_k = \sum_{S \in \mathcal{S}_k(G)} (-1)^{r(S)}2^{c(S)}W(S),
    \]
    where \(\mathcal{S}_k(G)\) denotes the set of Sachs' sub-graphs of \(G\) with exactly \(k\) vertices. Furthermore, \(b_0 = 1\).
    
\end{theorem}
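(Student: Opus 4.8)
The plan is to expand the determinant defining the characteristic polynomial, $\phi_G(x)=\det(xI-W)$, by the Leibniz formula and then to group the permutations according to the sub-graph traced out by their nontrivial cycles. Writing the sum over all permutations $\sigma$ of $\{1,\dots,n\}$,
\[
    \phi_G(x) = \sum_{\sigma} \sgn(\sigma) \prod_{i=1}^{n} \bigl(x\,\delta_{i,\sigma(i)} - w_{i,\sigma(i)}\bigr),
\]
and using that $w_{ii}=0$ because $G$ has no loops, the contribution of a permutation $\sigma$ with $f$ fixed points is $x^{f}(-1)^{n-f}\prod_{i:\sigma(i)\neq i} w_{i,\sigma(i)}$. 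Consequently only permutations with exactly $k$ non-fixed points contribute to the coefficient of $x^{n-k}$, and among those only the ones with $w_{i,\sigma(i)}\neq 0$ for every non-fixed index $i$.

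First I would analyse a single such permutation $\sigma$. Its non-fixed points decompose into cycles of length at least $2$, and the condition $w_{i,\sigma(i)}\neq 0$ forces every $2$-cycle $(i\,j)$ of $\sigma$ to be an edge of $G$ and every longer cycle to be a cycle of $G$; since the supports of these cycles are pairwise disjoint, their union is exactly a Sachs sub-graph $S$ of $G$ on $k$ vertices, and the number of these cycles equals $r(S)$. A short computation with signs, using that an $\ell$-cycle has sign $(-1)^{\ell-1}$ and hence $\sgn(\sigma)=(-1)^{k-r(S)}$, shows that the contribution of $\sigma$ to the coefficient of $x^{n-k}$ equals $(-1)^{r(S)}\prod_{i:\sigma(i)\neq i} w_{i,\sigma(i)}$.

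It then remains to sum over all permutations mapping to a fixed Sachs sub-graph $S$ and to identify the result with $(-1)^{r(S)}2^{c(S)}W(S)$. This is a purely combinatorial count carried out component by component: a $K_2$-component $\{v_i,v_j\}$ arises from the unique transposition $(i\,j)$, contributing the weight $w_{ij}w_{ji}=w_{ij}^2$; a cycle-component $C$ of length at least $3$ arises from exactly two cyclic permutations of its vertices, namely its two orientations, each contributing $\prod_{\{v_i,v_j\}\in E(C)} w_{ij}$. Multiplying the contributions of the components yields $2^{c(S)}W(S)$, and summing over $S\in\mathcal{S}_k(G)$ gives the claimed formula for $b_k$; the value $b_0=1$ comes from the leading term $x^{n}$, produced only by the identity permutation. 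I expect the only points requiring care to be the sign bookkeeping that produces $(-1)^{r(S)}$ and the verification that each cycle-component of length at least $3$ corresponds to precisely two permutation cycles, which is exactly what yields the factor $2^{c(S)}$; the remainder is a direct rewriting of the Leibniz expansion. One could instead treat the $w_{ij}$ as formal indeterminates and reduce the statement to the unweighted \cref{ST}, but the direct expansion seems more transparent.
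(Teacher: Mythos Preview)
Your argument is correct and is precisely the standard Leibniz-expansion proof of Sachs' theorem, carried over verbatim to the weighted setting; this is exactly what the paper has in mind when it says ``the proof of the above result follows the same lines as the proof of the usual Sachs' theorem'' without giving further detail. The only cosmetic slip is that your first displayed contribution $x^{f}(-1)^{n-f}\prod w_{i,\sigma(i)}$ omits the factor $\sgn(\sigma)$, but you correctly reinstate it in the next sentence when computing $(-1)^{r(S)}$.
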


The proof of the above result follows the same lines as the proof of the usual Sachs' theorem.

\subsection{Inequalities for the Energy of a Graph}

We define the \textit{energy} of a graph (weighted graph) \(G\) by
\[
    \energy{G} := \sum_{k = 1}^n \abs{\lambda_k},
\]
where \(\lambda_1, \ldots, \lambda_n\) are the eigenvalues associated to the graph \(G\).

We will review some useful inequalities and properties for the energy of a graph. The first inequality is McClelland's inequality \cite{McClelland71} which states that for a graph \(G\) with \(n\) vertices and \(m\) edges, it satisfies that
\begin{equation}\label{McClelland}
    \energy{G} \leq \sqrt{2mn},
\end{equation}
with equality if and only if \(G\) is isomorphic to \(\frac{n}{2}\) copies of \(K_2\).

Another inequality due to Koolen and Moulton \cite{Koolen01} is that for any graph \(G\) with \(n\) vertices and \(m\) edges such that \(2m \geq n\), we have
\begin{equation}\label{Koolen}
    \energy G \leq \frac{2m}{n} + \sqrt{(n-1)\left(2m - \left(\frac{2m}{n}\right)^2\right)}.
\end{equation}
This equality holds if and only if \(G\) is isomorphic to \(\frac{n}{2}\) copies of \(K_2\), is isomorphic to \(K_n\), or is a non-complete connected strongly regular graph with two non-trivial eigenvalues both having absolute values equal to \(\sqrt{(2m - (2m/n)^2)/(n-1)}\).

More recently, two bounds for the energy of the a graph in terms of the degrees of the graph were obtained.

\begin{proposition}[Arizmendi and Juarez \cite{Arizmendi18}]\label{eq:AJ}
For a graph \(G\) with vertices of degrees \(d_1, \ldots, d_n\) it is satisfied
\begin{equation}\label{eq:AJ2}
    \energy G \leq \sum_{i = 1}^n \sqrt{d_i}.
\end{equation}

\end{proposition}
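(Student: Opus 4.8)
The plan is to express the energy as a trace and reduce the statement to an elementary inequality about positive semidefinite matrices. By the spectral theorem write $A_G = U\Lambda U^{\mathsf T}$ with $U$ orthogonal and $\Lambda = \operatorname{diag}(\lambda_1,\dots,\lambda_n)$, and set $B := U\,\operatorname{diag}(\abs{\lambda_1},\dots,\abs{\lambda_n})\,U^{\mathsf T}$. Then $B$ is symmetric and positive semidefinite, $B^2 = A_G^2$, and $\operatorname{tr}(B) = \sum_{k=1}^n \abs{\lambda_k} = \energy{G}$; equivalently, $B = \abs{A_G} := (A_G^2)^{1/2}$. So it suffices to bound $\operatorname{tr}(B)$ from above by $\sum_{i=1}^n\sqrt{d_i}$.

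The one inequality that does the work is: for any symmetric positive semidefinite matrix $B=(B_{ij})$ one has $B_{ii} \le \sqrt{(B^2)_{ii}}$ for every $i$. This is immediate, since symmetry gives $(B^2)_{ii} = \sum_{j} B_{ij}B_{ji} = \sum_j B_{ij}^2 \ge B_{ii}^2$, the last step because $B_{ii}^2$ is one of the nonnegative terms of the sum; and $B_{ii}\ge 0$ because $B$ is positive semidefinite, so we may take square roots.

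Summing over $i$ and using $B^2 = A_G^2$,
\[
    \energy{G} = \operatorname{tr}(B) = \sum_{i=1}^n B_{ii} \le \sum_{i=1}^n \sqrt{(B^2)_{ii}} = \sum_{i=1}^n \sqrt{(A_G^2)_{ii}}.
\]
To finish, compute the diagonal of $A_G^2$: since the entries of $A_G$ lie in $\{0,1\}$ we have $(A_G)_{ij}^2 = (A_G)_{ij}$, hence $(A_G^2)_{ii} = \sum_j (A_G)_{ij}(A_G)_{ji} = \sum_j (A_G)_{ij} = d_i$, the number of neighbours of $v_i$. This gives $\energy{G}\le \sum_{i=1}^n\sqrt{d_i}$.

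There is no genuine obstacle here; the only points requiring a word of care are the identity $\energy{G} = \operatorname{tr}\abs{A_G}$ (a direct consequence of the spectral theorem) and the recognition that the positive semidefinite square root $B$ of $A_G^2$ is exactly the matrix to feed into the diagonal inequality. It is also worth recording the equality case: equality throughout forces $B_{ij}=0$ for all $i\ne j$, i.e.\ $\abs{A_G}$ diagonal, equivalently $A_G^2$ diagonal, which happens precisely when no two distinct vertices share a common neighbour, that is, when $G$ is a disjoint union of copies of $K_2$ and isolated vertices — consistent with the equality cases of the McClelland and Koolen–Moulton bounds.
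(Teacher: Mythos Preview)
Your proof is correct. The paper does not spell out a proof of this proposition; it cites \cite{Arizmendi18} and remarks that the inequality ``can be obtained from Ky-Fan's theorem by considering suitable sub-graphs.'' Concretely, that approach decomposes $G$ into weighted stars $S(v_i)$ centred at each vertex $v_i$ with weight $\tfrac{1}{2}$ on every incident edge; the weight matrices sum to $A_G$, each star has energy $2\sqrt{d_i/4}=\sqrt{d_i}$ by Lemma~\ref{lema:weighedstar}, and Ky-Fan gives $\energy{G}\le\sum_i\sqrt{d_i}$.

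Your route is genuinely different: instead of a combinatorial decomposition plus Ky-Fan, you work directly with $\abs{A_G}=(A_G^2)^{1/2}$ and the elementary diagonal inequality $B_{ii}\le\sqrt{(B^2)_{ii}}$ for positive semidefinite $B$. This is shorter and self-contained, needing neither Sachs' theorem nor Ky-Fan. On the other hand, the Ky-Fan/star-decomposition viewpoint is exactly what the paper later refines to prove Theorem~\ref{TP}: by weighting edges to leaves differently from edges to inner vertices, one gets the sharper bound $\sum_{v\in V'}\sqrt{3l(v)+d(v)}$. Your trace argument does not obviously suggest that refinement, so while it is the cleaner proof of this proposition in isolation, the decomposition method is the one that generalises in the direction the paper pursues. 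Your equality analysis is also correct and a nice addition.
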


\begin{proposition}[Arizmendi and Dominguez \cite{Arizmendi22}]\label{eq:AD}
For a tree \(T\) with vertices degrees \(\Delta = d_1 \geq \ldots \geq d_n\), \(n \geq 3\), it is satisfied
\begin{equation}\label{eq:AD2}
    \energy T \leq \sum_{i = 2}^n 2\sqrt{d_1 - 1} + 2\sqrt{\Delta} \leq \sum_{i = 1}^n 2\sqrt{d_i - 1} + 1.
\end{equation}

\end{proposition}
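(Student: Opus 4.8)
The plan is to interpret the energy as a matrix norm and then bound it by decomposing the adjacency matrix into star-shaped pieces. Since for a real symmetric matrix the singular values are the absolute values of the eigenvalues, we have $\energy{T}=\lVert A_T\rVert_*$, the nuclear (trace) norm of $A_T$. The nuclear norm is a norm, hence subadditive, and it is unchanged by conjugation with a permutation matrix and by bordering a matrix with zero rows and columns. So it suffices to exhibit a decomposition $A_T=\sum_{v\in V}B_v$ into symmetric matrices with small trace norm and invoke $\energy{T}\le\sum_{v\in V}\lVert B_v\rVert_*$. This is a refinement of the argument behind Proposition~\ref{eq:AJ}, where one instead splits every edge into two halves and assigns a half-weight star to each of its two endpoints; using a tree structure lets us avoid the splitting and gain a factor at each vertex.

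Fix a root $r$ of $T$, chosen to be a vertex of maximum degree $\Delta=d_1$. Orienting every edge toward $r$, each vertex $v\neq r$ has a unique parent $p(v)$, and every edge of $T$ is of the form $\{v,p(v)\}$ for exactly one vertex $v$. For $u\in V$ let $B_u$ be the $n\times n$ symmetric $0/1$ matrix whose nonzero entries are exactly those indexed by the edges joining $u$ to its children. Since each edge $\{v,p(v)\}$ occurs in $B_u$ only for $u=p(v)$, the $B_u$ partition the edge set and $A_T=\sum_{u\in V}B_u$. After deleting its zero rows and columns, $B_u$ is precisely the adjacency matrix of the star $S_{c(u)}$, where $c(u)$ is the number of children of $u$. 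The star $S_k$ has eigenvalues $\sqrt k,-\sqrt k$ and $0$ with multiplicity $k-1$ (e.g. because $A_{S_k}^2$ is the block-diagonal matrix $\begin{pmatrix} k & 0\\ 0 & J_k\end{pmatrix}$ with $J_k$ the $k\times k$ all-ones matrix), so $\lVert B_u\rVert_*=\energy{S_{c(u)}}=2\sqrt{c(u)}$. Now $c(u)=d(u)-1$ for $u\neq r$ and $c(r)=d(r)=\Delta$, so subadditivity gives
\[
\energy{T}\;\le\;\sum_{u\neq r}2\sqrt{d(u)-1}\;+\;2\sqrt{\Delta}\;=\;\sum_{i=2}^{n}2\sqrt{d_i-1}+2\sqrt{\Delta},
\]
after labeling so that $d_1=d(r)=\Delta$, which is the first inequality.

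For the second inequality, observe that a tree on $n\geq 3$ vertices has a vertex of degree at least $2$, so $\Delta\geq 2$ and $\sqrt{\Delta}-\sqrt{\Delta-1}=\bigl(\sqrt{\Delta}+\sqrt{\Delta-1}\bigr)^{-1}\leq\tfrac12$; multiplying by $2$ yields $2\sqrt{\Delta}\leq 2\sqrt{\Delta-1}+1$, and adding $\sum_{i=2}^{n}2\sqrt{d_i-1}$ to both sides turns the first bound into $\energy{T}\le\sum_{i=1}^{n}2\sqrt{d_i-1}+1$; this is where the hypothesis $n\ge 3$ is used. The computations are routine, and the main point requiring care is the verification that $\{B_u\}_{u\in V}$ is a genuine edge-partition of $A_T$, so that no weight is double-counted and every piece is an honest star adjacency matrix — this is exactly where acyclicity (the existence of a rooting with unique parents) enters, and it is why the argument does not carry over verbatim to general graphs. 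A minor additional point is the choice of root: any root $r$ gives a valid bound $\sum_{u\neq r}2\sqrt{d(u)-1}+2\sqrt{d(r)}$, and picking $r$ of maximum degree is what produces the clean expression above and, via the elementary estimate, the simpler final form.
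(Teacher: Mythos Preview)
Your proof is correct. The paper itself does not give a proof of this proposition: it is quoted from \cite{Arizmendi22} in the preliminaries, with only the remark that ``the above inequalities \eqref{eq:AJ2} and \eqref{eq:AD2} can be obtained from Ky-Fan's theorem by considering suitable sub-graphs.'' Your argument is precisely that: rooting at a maximum-degree vertex, partitioning the edges into the parent-to-children stars $B_u$, and applying subadditivity of the nuclear norm (equivalently Ky-Fan) together with $\energy{S_k}=2\sqrt{k}$. This is exactly the intended decomposition, so your proof matches the approach the paper alludes to.
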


Finally, the following theorem of Ky-Fan is very useful. It shows that the energy of graphs satisfies a sort of triangle inequality. 
\begin{theorem}[Ky-Fan's theorem for weighted sub-graphs]
    Let \(G\) be a weighted graph and \(H_1, \ldots, H_n\) be weighted sub-graphs of \(G\) whose weight matrices satisfy that \(W_G = W_{H_1} + \ldots + W_{H_n}\). Then
    \[
        \energy{G} \leq \energy{H_1} + \ldots + \energy{H_n},
    \]
    with equality if and only if \(n = 1\).
\end{theorem}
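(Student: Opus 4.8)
The plan is to recognize the energy of a weighted graph as the \emph{trace norm} (Schatten $1$-norm, i.e.\ the Ky-Fan $N$-norm on $N\times N$ matrices) of its weight matrix and then read the statement off from the triangle inequality for that norm. As a preliminary step I would pass to a common matrix size: writing $N=\abs{V(G)}$, extend each $W_{H_i}$ to an $N\times N$ symmetric matrix by adjoining zero rows and columns indexed by the vertices of $G$ not in $H_i$. This alters neither the nonzero eigenvalues nor $\energy{H_i}$, and it turns $W_G=W_{H_1}+\cdots+W_{H_n}$ into a genuine identity of $N\times N$ real symmetric matrices. Such a matrix has singular values equal to the absolute values of its eigenvalues, so $\energy{H_i}=\lVert W_{H_i}\rVert_1$ for each $i$ and $\energy{G}=\lVert W_G\rVert_1$, where $\lVert\cdot\rVert_1$ denotes the sum of singular values.

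For the inequality I would use the dual description $\lVert M\rVert_1=\max\{\operatorname{tr}(MS):S=S^{\top},\ \lVert S\rVert_{\mathrm{op}}\le 1\}$ for symmetric $M$, with the maximum attained at $S=\sgn(M)$ (same eigenvectors as $M$, eigenvalues $\sgn(\lambda_k(M))$); the bound $\operatorname{tr}(MS)\le\lVert M\rVert_1$ for every contraction $S$ is the duality between the Schatten $1$- and $\infty$-norms, i.e.\ von Neumann's trace inequality. Choosing $S_0$ optimal for $W_G$ gives
\[
\energy{G}=\operatorname{tr}(W_G S_0)=\sum_{i=1}^{n}\operatorname{tr}(W_{H_i}S_0)\le\sum_{i=1}^{n}\lVert W_{H_i}\rVert_1=\sum_{i=1}^{n}\energy{H_i},
\]
which is the asserted bound. (Equivalently, one just invokes that $\lVert\cdot\rVert_1$ is a norm and inducts on $n$.)

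For the equality clause I would appeal to the equality condition in this chain: it holds iff a single contraction $S_0$ is simultaneously norm-attaining for every summand $W_{H_i}$, which for real symmetric matrices means, roughly, that all the $W_{H_i}$ diagonalize in one common orthonormal basis along which no two of them have eigenvalues of strictly opposite sign on any basis vector; from here one must deduce $n=1$. This last step is where I expect the real work to lie: the bare ``$n=1$'' statement hides a case analysis, and some genuinely admissible decompositions do achieve equality with $n\ge 2$ (for instance splitting a disconnected $G$ into its connected components, or writing a weighted $K_2$ as the sum of two positive multiples of the same weight-$1$ edge). Making the equality clause correct therefore seems to require an extra non-degeneracy hypothesis on $H_1,\dots,H_n$ — e.g.\ that no $W_{H_i}$ vanishes and no two are positively proportional, perhaps together with a connectedness assumption — or a precise delineation of which decompositions are intended; the inequality, in contrast, needs no such restriction.
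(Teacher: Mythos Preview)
The paper does not actually prove this theorem: it is stated in the preliminaries as a known tool (the classical Ky--Fan inequality / subadditivity of the trace norm) and then invoked in the proof of Theorem~\ref{TP}. So there is no ``paper's own proof'' to compare against.

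Your argument for the inequality is correct and is the standard one: after padding each $W_{H_i}$ with zeros to make it an $N\times N$ symmetric matrix, the energy is precisely the Schatten $1$-norm, and the result is just the triangle inequality for that norm (equivalently, the dual/variational formulation via von Neumann's trace inequality that you wrote out).

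Your reservations about the equality clause are well founded and worth recording. As written, ``equality iff $n=1$'' is false: a graph with two nontrivial connected components, or a weighted $K_2$ split as the sum of two positively scaled copies of itself, already gives equality with $n\ge 2$. What the paper actually \emph{uses} is much narrower: in the proof of Theorem~\ref{TP} the summands are the weighted stars $S(v)$ for $v\in V'$, $G$ is connected with $\abs{V}\ge 3$, and one checks that distinct stars $S(v)$, $S(w)$ with $v\sim w$ share an edge carrying weight $1/2$ in each; in that setting no single sign matrix $S_0$ can be simultaneously norm-attaining for two such overlapping stars unless there is only one star. So the equality discussion in the paper is fine for its intended application, but the blanket ``iff $n=1$'' in the stated theorem needs exactly the kind of nondegeneracy hypotheses you identified (no zero summand, no two summands positively proportional, some overlap/connectedness condition) to be literally true.
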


As pointed out in \cite{Arizmendi22}, the above inequalities \eqref{eq:AJ2} and \eqref{eq:AD2} can be obtained from Ky- Fan's theorem by considering suitable sub-graphs. Our aim is to improve the above bound by considering instead weighted sub-graphs.

\section{Main Result} \label{result_section}
In this section we prove the main inequality as described in the introduction. Then we compare with the known inequalities mentioned in the the preliminaries.
\subsection{An inequality for the energy in terms of degrees and leaves}

Our main result comes from the idea of thinking a graph as the union of several weighted stars. This decomposition, together with Ky-Fan's theorem, will help us to find a new inequality for the energy of a graph.  

Before going into the details of the main proof let us state a useful lemma.

\begin{lemma}\label{lema:weighedstar}
Let $S:=(S_n,W)$ be a weighted star with $n$ edges and weights $(w_{i})_{i = 1}^n$. Then,
$$\mathcal{E}(S) = 2\sqrt{\sum_{i = 1}^n w_i^2}.$$

\end{lemma}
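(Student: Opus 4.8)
The plan is to compute the characteristic polynomial $\phi_S$ of the weight matrix $W$ of $S$ explicitly and then read off the eigenvalues. I would carry this out via Sachs' theorem for weighted graphs, which is already available above.

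First I observe that $S_n$ is a tree, so it contains no cycle, and moreover any two of its edges share the central vertex $v_0$; hence no sub-graph of $S$ can contain two vertex-disjoint edges. Therefore the only Sachs' sub-graphs of $S$ are its individual edges $\{v_0, v_i\}$, each isomorphic to a weighted $K_2$ on two vertices, with $r = 1$, $c = 0$ and weight $w_i^2$. Consequently $\mathcal{S}_k(S) = \emptyset$ for every $k \geq 1$ with $k \neq 2$, while $\mathcal{S}_2(S)$ consists of exactly the $n$ edges. Applying Sachs' theorem for weighted graphs gives $b_0 = 1$, $b_2 = -\sum_{i=1}^n w_i^2$, and $b_k = 0$ for all other $k$, so
\[
    \phi_S(x) = x^{n+1} - \left(\sum_{i=1}^n w_i^2\right) x^{n-1} = x^{n-1}\left(x^2 - \sum_{i=1}^n w_i^2\right).
\]
Reading off the roots, the eigenvalues of $S$ are $0$ with multiplicity $n-1$ together with $\pm\sqrt{\sum_{i=1}^n w_i^2}$, and summing absolute values yields $\mathcal{E}(S) = 2\sqrt{\sum_{i=1}^n w_i^2}$.

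As a sanity check, and as an alternative route avoiding Sachs' theorem, one may argue by elementary linear algebra: writing $w = (w_1, \dots, w_n)^{\top}$, the weight matrix is the bordered matrix $W = \left(\begin{smallmatrix} 0 & w^{\top} \\ w & 0 \end{smallmatrix}\right)$, which has rank at most two; its kernel is $\{0\} \times w^{\perp}$, of dimension $n-1$, and a short computation shows that the two remaining eigenvalues are $\pm\lVert w\rVert$, with eigenvectors of the form $(\pm\lVert w\rVert,\, w^{\top})^{\top}$.

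I do not expect any genuine obstacle here; the only points that need a little care are the correct multiplicity of the zero eigenvalue (it is $n-1$, since $S$ has $n+1$ vertices, not $n$) and the degenerate case $w = 0$, in which $W$ is the zero matrix and both sides of the claimed identity equal $0$.
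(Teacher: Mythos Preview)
Your proof is correct and follows essentially the same approach as the paper: both apply Sachs' theorem for weighted graphs to obtain $\phi_S(x) = x^{n-1}\bigl(x^2 - \sum_{i=1}^n w_i^2\bigr)$ and read off the eigenvalues. Your version is in fact slightly more detailed (you justify explicitly why the only Sachs' sub-graphs are the single edges) and adds an independent linear-algebra sanity check, but the core argument is the same.
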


\begin{proof}
By Sachs' theorem for weighted graphs, we get the characteristic polynomial $\phi_S$ of \(S\) given by
\begin{align*}
    \phi_{S}(x) &= x^{n + 1} - \left(\sum_{i = 1}^n w_i^2\right)x^{n - 1} = x^{n - 1}\left(x^2 - \sum_{i = 1}^n w_i^2\right).
\end{align*}
    
We conclude that the eigenvalues of \(S\) are 0, with multiplicity $n-1$ and \(\pm\sqrt{\sum_{i = 1}^n w_i^2}\). Therefore,
\[
    \energy{S_n} = 2\sqrt{\sum_{i = 1}^n w_i^2}.
\]
    
\end{proof}
Now, we need to establish a couple of definitions, in order to state our main result.
Let \(G = (V, E)\) be a graph and \(v \in V\) a vertex of \(G\). We define the set of neighbors of \(v\) such that they are leafs as \(L(v)\), i.e.
\[
    L(v) := \{w \in V : v \sim w,\, d(w) = 1\},
\] 
and \(l(v) := \abs{L(v)}\) as the number of neighbors of \(v\) which are leafs. Note that \(L(v) \subseteq N(v)\). Let \(L(G)\) be the set of all leafs of the graph \(G\). If there is no risk of confusion, we will write $L := L(G)$. Besides, let \(V'(G)\) be the set of inner vertices of \(G\), or the set of all vertices in \(G\) which are not leafs, i.e.
\[
    V'(G) := V \setminus L = \{v \in V : d(v) \geq 2\}.
\]
If there is no risk of confusion, we will write \(V' := V'(G)\).

The number of edges of a graph $G$ with both endpoints being leaves will be helpful later on. Let denote it by $e_{11}(G)$. If there is no risk of confusion, we will write $e_{11} := e_{11}(G)$.

\begin{theorem}\label{TP}
    Every connected graph \(G = (V,E)\) with at least three nodes satisfies the inequality
    \[
        \energy{G} \leq \sum_{v \in V'} \sqrt{3l(v) + d(v)} = \sum_{v \in V'} \sqrt{4l(v) + \delta(v)}.
    \]
    This equality holds if and only if \(G\) is isomorphic to a star.
    
\end{theorem}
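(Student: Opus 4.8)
The plan is to decompose the graph $G$ into weighted stars centered at the inner vertices, apply Ky-Fan's theorem, and then bound the energy of each weighted star using Lemma~\ref{lema:weighedstar}. Concretely, for each inner vertex $v \in V'$ I would form a weighted star $S_v$ centered at $v$ whose edges are exactly the edges of $G$ incident to $v$, and I would choose the weights so that $W_G = \sum_{v \in V'} W_{S_v}$. The natural choice is: an edge $\{v,w\}$ joining two inner vertices gets split, contributing weight $a$ to $S_v$ and weight $1-a$ to $S_w$ for some $a \in [0,1]$ (say $a = 1/2$ to start), while an edge $\{v,w\}$ with $w$ a leaf is assigned entirely to $S_v$ with weight $1$ (since a leaf is adjacent to only one inner vertex, there is no conflict). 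Edges joining two leaves cannot occur in a connected graph with at least three nodes, so $e_{11} = 0$ here and that term drops — this is why the theorem statement for connected graphs has no $2e_{11}$ summand.

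Once the decomposition is set up, Ky-Fan gives $\energy{G} \le \sum_{v \in V'} \energy{S_v}$, and Lemma~\ref{lema:weighedstar} evaluates $\energy{S_v} = 2\sqrt{\sum_i w_i^2}$ where the sum is over the edges at $v$. With the weight choice above, the edges at $v$ split into $l(v)$ leaf-edges each contributing $1^2 = 1$, and $\delta(v)$ inner-edges each contributing $a^2$ or $(1-a)^2 \le 1$; with $a=1/2$ each inner-edge contributes $1/4$ from the side it's viewed from. Then $\energy{S_v} = 2\sqrt{l(v) + \tfrac14\delta(v)} = \sqrt{4l(v) + \delta(v)}$. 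Since $d(v) = l(v) + \delta(v)$, we have $4l(v) + \delta(v) = 3l(v) + d(v)$, giving both forms in the statement. Summing over $v \in V'$ yields the claimed bound.

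The step I expect to require the most care is the equality characterization. Ky-Fan's theorem gives equality only when $n = 1$, i.e.\ when there is a single weighted subgraph in the decomposition — which forces $V'$ to be a single inner vertex, hence $G$ is a star. But I must also verify the converse direction honestly: if $G = S_n$ is a star, the decomposition has exactly one inner vertex, so $\energy{G} = \energy{S_{\text{center}}} = 2\sqrt{n}$ by Lemma~\ref{lema:weighedstar} with all weights $1$, and indeed $\sum_{v \in V'}\sqrt{3l(v)+d(v)} = \sqrt{3n + n} = 2\sqrt n$, so equality holds. One subtlety: I should be slightly careful that when $|V'| = 1$ but $G$ is not literally a star — can this happen? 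If there is exactly one inner vertex $v$, every other vertex is a leaf adjacent only to $v$, so $G$ is exactly a star; no issue. I would also note that the choice $a = 1/2$ is what makes the bound clean, and remark (or leave implicit) that other splits $a \in [0,1]$ give weaker bounds, so optimizing over $a$ is unnecessary for this particular inequality.

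A minor point to handle at the start: I should confirm that every edge of $G$ is accounted for exactly once in $\sum_{v\in V'} W_{S_v}$. A leaf-edge $\{v,w\}$ with $w\in L$ has $v\in V'$ (since $G$ is connected with $\ge 3$ nodes, $v$ cannot also be a leaf), and it appears only in $S_v$. An inner-edge appears in exactly two stars $S_v, S_w$ with weights summing to $1$. Hence $W_G = \sum_{v\in V'} W_{S_v}$ as weighted adjacency matrices, and the hypotheses of Ky-Fan's theorem are met. With that bookkeeping in place, the proof is a short assembly of the three ingredients.
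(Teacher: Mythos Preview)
Your proposal is correct and follows essentially the same approach as the paper: decompose $G$ into weighted stars $S_v$ centered at each inner vertex $v\in V'$, with weight $1$ on leaf-edges and $1/2$ on inner-edges, then apply Ky-Fan together with Lemma~\ref{lema:weighedstar}. Your treatment of the equality case is in fact slightly more explicit than the paper's, which simply invokes the equality condition in Ky-Fan without separately verifying the star case.
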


\begin{proof}
    Let \(v \in V'\) and define \(S(v)\) as the weighted sub-graph of \(G\) induced by \(N(v) \cup \{v\}\). We assign weights to every edge \(\{v,w\}\) of \(S(v)\) as follows:
    \begin{itemize}
        \item if \(w \in L(v)\), we assign a weight of 1 to \(\{v,w\}\),
        \item if \(w \notin L(v)\), we assign a weight of \(\frac{1}{2}\) to \(\{v,w\}\).
    \end{itemize}

    Notice that
    \[
        G = \bigcup_{v \in V'} S(v).
    \]
    Indeed, it is clear that
    \[
        \bigcup_{v \in V'} V(S(v)) \subseteq V(G).
    \]
    On the other hand, if \(v \in L\), there is a unique \(w \in V\) such that \(v \sim w\). Since \(G\) is connected and it has at least three vertices, we have \(v \in N(w)\) with \(w \in V'\). Therefore, since \(V(S(v)) = N(v) \cup \{v\}\), one can see that
    \[
        V(G) \subseteq \bigcup_{v \in V'} V(S(v))
    \]
    and we can conclude that
    \[
        V(G) = \bigcup_{v \in V'} V(S(v)).
    \]
    In a similar way, we can see that
    \[
        E(G) = \bigcup_{v \in V'} E(S(v)).
    \]
    
    It only remains to check that each edge has a total weight of 1 when considering the decomposition into weighted stars of the graph \(G\). Let \(\{v,w\} \in E\) be an edge of \(G\). Since \(V = L \cup V'\) with \(L \cap V' = \emptyset\), we examine two cases:
    \begin{itemize}
        \item If \(v \in L\), then the edge \(\{v,w\}\) is the only edge that contains to \(v\) as an endpoint. The vertex \(v\) appears only once in the star decomposition of \(G\) and it is in \(S(w)\) with weight 1. It is the same analysis when \(w \in L\).
        \item If \(v,w \in V'\), the edge \(\{v,w\}\) appears only twice in the star decomposition of \(G\). It appears once in \(S(v)\) with weight \(\frac{1}{2}\) and another time in \(S(w)\) with the same weight of \(\frac{1}{2}\). At the end, the edge \(\{v,w\}\) has a total weight of 1.
    \end{itemize}

    By using Ky-Fan theorem for weighted sub-graphs, we get the inequality
    \[
        \energy G \leq \sum_{v \in V'} \energy{S(v)},
    \]
    where the equality holds if and only if \(G\) has just one weighted star component, i.e., if and only if \(G\) is isomorphic to a star.

Consequently, by Lemma \ref{lema:weighedstar}
    \[
        \energy G \leq \sum_{v \in V'} \sqrt{3l(v) + d(v)} = \sum_{v \in V'} \sqrt{4l(v) + \delta(v)},
    \]
    where the last inequality holds because \(d(v) = l(v) + \delta(v)\) for every vertex \(v \in V\).
    
\end{proof}

We can generalize the result from Theorem \ref{TP} to a general graph \(G\) as follows.

\begin{theorem}\label{TPG}
    For a graph \(G = (V,E)\) we have
    \[
    \energy G \leq 2 e_{11} + \sum_{v \in V'} \sqrt{3l(v) + d(v)}.
    \]
    
\end{theorem}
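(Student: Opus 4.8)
The plan is to reduce to Theorem~\ref{TP} by decomposing $G$ into its connected components and exploiting that the energy is additive over them. Write $G = C_1 \cup \dots \cup C_r$ as the disjoint union of its connected components. The adjacency matrix $A_G$ is block diagonal with blocks $A_{C_1}, \dots, A_{C_r}$, so its spectrum is the union with multiplicity of the spectra of the $A_{C_i}$, and therefore $\energy{G} = \sum_{i=1}^r \energy{C_i}$.

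Next I would classify the components into three types. A component that is a single isolated vertex contributes $0$ to the energy. A component isomorphic to $K_2$ has eigenvalues $\pm 1$, hence contributes $2$. The key bookkeeping observation is that an edge $\{v,w\}$ of $G$ with both endpoints leaves forces $d(v)=d(w)=1$, so $\{v,w\}$ is itself a connected component isomorphic to $K_2$; conversely every $K_2$ component arises this way. Hence the number of $K_2$ components is exactly $e_{11}$, contributing $2e_{11}$ in total. Any remaining component $C$ is connected with at least three vertices, so Theorem~\ref{TP} applies to it.

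For such a component $C$ with $\abs{V(C)}\ge 3$, the degree of a vertex and its number of leaf-neighbours are the same computed in $C$ or in $G$ (distinct components share no edges), and an inner vertex of $C$ is an inner vertex of $G$. Moreover every inner vertex of $G$ lies in exactly one such large component, since isolated-vertex and $K_2$ components have no vertex of degree $\ge 2$. Thus the sets $V'(C)$, over the large components, partition $V'(G)$. Applying Theorem~\ref{TP} to each large component and summing,
\[
\sum_{C \text{ large}} \energy{C} \;\le\; \sum_{C \text{ large}} \sum_{v \in V'(C)} \sqrt{3l(v) + d(v)} \;=\; \sum_{v \in V'} \sqrt{3l(v) + d(v)}.
\]
Adding the three contributions yields $\energy{G} \le 2e_{11} + \sum_{v\in V'}\sqrt{3l(v)+d(v)}$, as claimed. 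The only step requiring genuine care — and the one most prone to a slip — is this combinatorial bookkeeping: that edges joining two leaves are precisely the $K_2$ components, and that the $V'(C)$ exhaust $V'(G)$ without overlap; the rest is immediate from additivity of energy and Theorem~\ref{TP}.
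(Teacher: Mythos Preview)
Your argument is correct and is essentially the same as the paper's: decompose into connected components, use additivity of the energy, observe that isolated vertices contribute nothing, that the $K_2$ components are exactly the $e_{11}$ edges joining two leaves and contribute $2$ each, and apply Theorem~\ref{TP} to the remaining components while noting that the $V'(C)$ partition $V'$. Your write-up in fact spells out the bookkeeping (why $e_{11}$ equals the number of $K_2$ components, and why degrees and leaf counts agree in $C$ and in $G$) a bit more explicitly than the paper does.
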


\begin{proof}
    Let \(G = (V,E)\) be a graph with connected components \(H_1, \ldots, H_n\). Since the energy is linear over connected components, we obtain
    \[
    \energy G = \sum_{k = 1}^n \energy{H_k}.
    \]

    Notice there are three types of connected components: isolated vertices which have energy zero; isolated edges which have energy 2; connected graphs with at least three vertices. Since
    \[
        V'(G) = \bigcup_{k = 1}^n V'(H_k)
    \]
    where \(V'(H_1), \ldots V'(H_n)\) are disjoint by pairs and \(V'(H) = \emptyset\) for a graph \(H\) isomorphic to an edge, we conclude by Theorem \ref{TP} that
    \[
        \energy G \leq 2 e_{11} + \sum_{v \in V'} \sqrt{3l(v) + d(v)}.
    \]
    
\end{proof}

We can also have a global version of our main theorem.
 
\begin{corollary}\label{global_bound}
    For a graph \(G = (V,E)\) without isolated nodes
    it is satisfied that
    \[
        \energy G \leq  2 e_{11} + \sqrt{2(\abs{V} - \abs{L})(\abs{E} + \abs{L} - 3 e_{11})}.
    \]
    
\end{corollary}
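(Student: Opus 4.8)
The plan is to derive Corollary \ref{global_bound} from Theorem \ref{TPG} by applying the Cauchy--Schwarz inequality to the sum over inner vertices. Starting from
\[
    \energy G \leq 2 e_{11} + \sum_{v \in V'} \sqrt{3l(v) + d(v)},
\]
I would bound the sum $\sum_{v \in V'} \sqrt{3l(v) + d(v)}$ by $\sqrt{\abs{V'} \cdot \sum_{v \in V'} (3l(v) + d(v))}$, using that there are $\abs{V'} = \abs{V} - \abs{L}$ terms (here we use that $G$ has no isolated nodes, so $V = L \sqcup V'$). The main work is then to evaluate $\sum_{v \in V'} (3l(v) + d(v))$ in terms of $\abs{E}$, $\abs{L}$, and $e_{11}$.

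For this counting step I would argue as follows. Since every leaf of $G$ is either matched to another leaf (contributing to one of the $e_{11}$ edges) or attached to an inner vertex, we have $\abs{L} = 2e_{11} + \sum_{v \in V'} l(v)$, so $\sum_{v \in V'} l(v) = \abs{L} - 2e_{11}$. For the degree sum over inner vertices, I would use the handshake-type identity: every edge of $G$ has $0$, $1$, or $2$ endpoints in $V'$; the edges with $0$ endpoints in $V'$ are exactly the $e_{11}$ leaf--leaf edges, and every other edge has at least one endpoint in $V'$. Writing $\sum_{v \in V'} d(v)$ as the number of (edge, endpoint-in-$V'$) incidences, this equals $2\abs{E} - (\text{number of edges with exactly one endpoint in } V') - 2e_{11}$. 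An edge with exactly one endpoint in $V'$ is precisely a leaf--inner edge, and there are $\sum_{v\in V'} l(v) = \abs{L} - 2e_{11}$ of these. Hence $\sum_{v \in V'} d(v) = 2\abs{E} - (\abs{L} - 2e_{11}) - 2e_{11} = 2\abs{E} - \abs{L}$. Combining, $\sum_{v\in V'}(3l(v) + d(v)) = 3(\abs{L} - 2e_{11}) + 2\abs{E} - \abs{L} = 2\abs{E} + 2\abs{L} - 6e_{11} = 2(\abs{E} + \abs{L} - 3e_{11})$.

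Plugging this into the Cauchy--Schwarz bound gives
\[
    \sum_{v \in V'} \sqrt{3l(v) + d(v)} \leq \sqrt{(\abs{V} - \abs{L}) \cdot 2(\abs{E} + \abs{L} - 3e_{11})},
\]
and adding $2e_{11}$ yields exactly the claimed inequality. I do not anticipate a serious obstacle here; the only point requiring care is the edge-counting bookkeeping distinguishing leaf--leaf, leaf--inner, and inner--inner edges, and making sure the identity $\abs{L} = 2e_{11} + \sum_{v\in V'} l(v)$ is used consistently. One should also note the implicit assumption that isolated edges (copies of $K_2$) contribute correctly: both endpoints are leaves, so such a component contributes $2$ to $2e_{11}$ and nothing to the inner-vertex sum, matching its energy $2$ as in the proof of Theorem \ref{TPG}.
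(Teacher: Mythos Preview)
Your proposal is correct and follows essentially the same route as the paper: apply Cauchy--Schwarz to the inner-vertex sum in Theorem \ref{TPG}, then compute $\sum_{v\in V'} l(v)=\abs{L}-2e_{11}$ and $\sum_{v\in V'} d(v)=2\abs{E}-\abs{L}$. The only cosmetic difference is that the paper obtains the degree identity directly from $\sum_{v\in V} d(v)=2\abs{E}$ and $\sum_{v\in L} d(v)=\abs{L}$, rather than via your edge--endpoint incidence count.
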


\begin{proof}
    Let \(G = (V,E)\) be a graph. By the theorem below and Cauchy-Schwartz inequality, we get the next series of inequalities
    \begin{align*}
        \energy G &\leq \sum_{v \in V'} \sqrt{3l(v) + d(v)}\\
        &\leq \sqrt{\abs{V'}\left(\sum_{v \in V'} 3l(v) + d(v)\right)}.
    \end{align*}

    Since $G$ does not have isolated vertices, we have that \(\abs{V'} = \abs{V} - \abs{L}\). Note also that for every leaf \(v \in L\) of \(G\) there exist a unique \(w \in L\) where \(v,w\) is and edge counted in \(e_{11}\) or a unique \(w \in V'\) such that \(v \sim w\). Then, for every $v \in L$, it is satisfied that \(l(v) = 1\) if \(v\) is endpoint of an edge counted in \(e_{11}\) or \(l(v) = 0\) otherwise. It follows that
    \[
        \sum_{v \in V'} l(v) =  \abs{L} - 2e_{11}.
    \]

    Using that \(\sum_{v \in V} d(v) = 2\abs{E}\), we obtain in a similar way that
    \[
        \sum_{v \in V'} d(v) = 2\abs{E} - \abs{L}.
    \]

    Placing all together, we obtain
    \begin{align*}
        \energy G &\leq 2e_{11} + \sqrt{2(\abs{V} - \abs{L})(\abs{E} + \abs{L} - 3e_{11})}.
    \end{align*}
    
\end{proof}

\begin{remark}
    Note that for a graph \(G = (V,E)\) with \(e_{11} = 0\) the above inequality is equivalent to
    \[
        \energy G \leq \sqrt{2(\abs{V} - \abs{L})(\abs{E} + \abs{L})}.
    \]
    In particular, for a connected graph with at least three vertices this inequality holds. In addition, if \(\abs{V} \leq \abs{E} + \abs{L}\) holds, this bound is smaller than McClelland's bound \eqref{McClelland}.
    
\end{remark}

\begin{remark}
    Note that for a graph \(G = (V,E)\) with set of isolated vertices \(I\) we can infer the following inequality,
    \[
        \energy G \leq 2 e_{11} + \sqrt{2(\abs{V} - \abs{L} - \abs{I})(\abs{E} + \abs{L} - 3 e_{11})}.
    \]
    
\end{remark}

A question we can ask is when the local bound and the global bound for the energy of a graph coincide. The next example shows an answer for a particular case.

\begin{example}
For a connected graph with at least three vertices, according to Cauchy-Schwartz inequality, the equality between local and global bounds are equal if and only if
\begin{equation}\label{pinatas}
    4l(v) + \delta(v) = k, \qquad \mbox{for every } v \in V',
\end{equation}
for some constant \(k \in \N\). Equivalently,
\[
    \delta(v) = k - 4l(v), \qquad \mbox{for every } v \in V'.
\]

It means if the equality between local and global bounds are the same, then the inner degree of inner vertices differs by a multiple of 4. In the particular case when \(d(v) = d\) and \(l(v) = l\)  for some constants \(d,l \in \N\) and every \(v \in V'\), we obtain this graph as a copy of a \(d\)-regular graph and then attaching \(l\) leaves to every vertex of the \(d\)-regular graph.

\begin{figure}[!htp]
    \centering
    \begin{subfigure}[b]{0.3\textwidth}
        \centering
        \begin{tikzpicture}
            \foreach \n in {1, ..., 7}{
                \node[style = {my_style}] at ({\n*360/7 + 90}:1) (\n) {};
            }

            \draw (1) -- (2) -- (3) -- (4) -- (5) -- (6) -- (7) -- (1);

            \foreach \n in {1, ..., 14}{
                \node[style = {my_style}] at ({(\n + 0.5)*360/14 + 90}:1.8) (n\n) {};) 
            }

            \draw (n1) -- (1) -- (n2);
            \draw (n3) -- (2) --  (n4);
            \draw (n5) -- (3) --  (n6);
            \draw (n7) -- (4) --  (n8);
            \draw (n9) -- (5) --  (n10);
            \draw (n11) -- (6) --  (n12);
            \draw (n13) -- (7) -- (n14);
            
        \end{tikzpicture}
        \caption{}
    \end{subfigure}%
    \begin{subfigure}[b]{0.3\textwidth}
        \centering
        \begin{tikzpicture}
            \foreach \n in {1, ..., 7}{
                \node[style = {my_style}] at ({(\n - 1)*360/7 + 90}:1) (\n) {};
            }

            \foreach \n in {2, ..., 7}{
                \draw (1) -- (\n);
            }

            \draw (2) -- (5); \draw (3) -- (6); \draw (4) -- (7);

            \node[style = {my_style}] at (90:1.8) (n11) {};
            \draw (n11) -- (1);

            \foreach \n in {2, ..., 7}{
                \node[style = {my_style}] at ({(2*\n - 2.5)*360/14 + 90}:1.8) (n1\n) {};
                \node[style = {my_style}] at ({(2*\n - 1.5)*360/14 + 90}:1.8) (n2\n) {};
                \draw (n1\n) -- (\n) -- (n2\n);
            }
            
        \end{tikzpicture}
        \caption{}
    \end{subfigure}%
    \begin{subfigure}[b]{0.3\textwidth}
        \centering
        \begin{tikzpicture}
            \foreach \n in {1, ..., 7}{
                \node[style = {my_style}] at ({(\n - 0.5)*360/7 + 90}:1) (\n) {};
            }

            \foreach \n in {2, ..., 7}{
                \draw (1) -- (\n);
            }

            \foreach \n in {2, ..., 6}{
                \draw (7) -- (\n);
            }

            \node[style = {my_style}] at (360/14 + 90:1.8) (n11) {};
            \draw (n11) -- (1);

            \node[style = {my_style}] at (-360/14 + 90:1.8) (n17) {};
            \draw (n17) -- (7);

            \foreach \n in {2, ..., 6}{
                \node[style = {my_style}] at ({(2*\n - 1.5)*360/14 + 90}:1.8) (n1\n) {};
                \node[style = {my_style}] at ({(2*\n - 0.5)*360/14 + 90}:1.8) (n2\n) {};
                \draw (n1\n) -- (\n) -- (n2\n);
            }
            
        \end{tikzpicture}
        \caption{}
    \end{subfigure}
    \caption{Some examples of connected graphs where the equality between local and global bound holds.}
\end{figure}
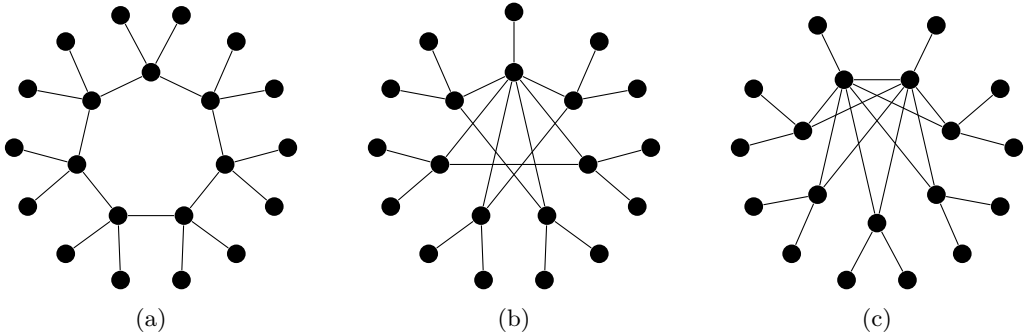

\end{example}

\subsection{Comparison with previous bounds}

To end this section we would like to compare our main inequality with the known upper bounds for the energy of graphs.

First, Theorem \ref{TPG} improves the inequality of Proposition \ref{eq:AJ}. Indeed, let \(G\) be a graph. Since \(\sqrt{3l + d} \leq l + \sqrt{d}\) for every \(l,d \geq 1\), we get
    \[
        \sum_{v \in V'} \sqrt{3l(v) + d(v)} \leq L + \sum_{v \in V'} \sqrt{d(v)} = \sum_{v \in V} \sqrt{d(v)}.
    \]

On the other hand, making a comparison with respect to the inequality of Proposition \ref{eq:AD} is a bit more subtle. A term-to-term comparison tell us that the inequality in Theorem \ref{TP} is better than the bound by Arizmendi and Dominguez, given in \eqref{eq:AD2},  if and only if \(2 \leq \delta(v)\) for every \(v \in V'\). However, this condition is impossible to be satisfied by a tree.

We now show an example and a criterion to decide which inequality is better.

\begin{example}
    The bound offered by Theorem \ref{TP} is smaller than the bound of Proposition \ref{eq:AJ} for a double stars \(S_{p,q}\) (obtained by connecting with an edge the centers of a star \(S_p\) and a star \(S_q\)) when \(p \leq q \leq 3p - 1\).

    In fact, let \(S_{p,q}\) be a double star with \(p \leq q\) for which the bound of Theorem \ref{TP} improves the bound of Proposition \ref{eq:AJ}. We have
    \[
        \sqrt{4p + 1} + \sqrt{4q + 1} \leq 2\left(\sqrt{p} + \sqrt{q + 1}\right),
    \]
    which is equivalent to the inequality
    \[
        \sqrt{\left(p + \frac{1}{4}\right)\left(q + \frac{1}{4}\right)} \leq \frac{1}{4} + \sqrt{p\left(q + 1\right)}
    \]
    Particularly, if \(q \leq 3p - 1\) then the above inequality holds.
    
\end{example}

\begin{theorem}
    Let \(f\) be the real function defined by
    \[
        f(x,y) = \sqrt{4x + 4(y-1)} - \sqrt{4x + y}, \qquad \mbox{for } x, y \geq 1,
    \]
    \(T = (V,E)\) be a tree with \(n \geq 3\) vertices and \(V_1, V_2\) be the subsets of vertices defined by
    \[
        V_1 := \{v \in V' : \delta(v) = 1\}, \qquad V_2 := \{v \in V' : \delta(v) \geq 2\}.
    \]
    Theorem \ref{TP} improves Proposition \ref{eq:AD} for the energy of \(T\) if one of the following conditions hold:
    \begin{enumerate}[i)]
        \item \(\sum_{v \in V'} f(l(v),\delta(v)) \geq 0;\)
        \item \(\sum_{v \in V_1} f(l(v),1) + \sum_{v \in V_2} f(l(v),2) \geq 0;\)
        \item \(\left(\sqrt{l_2 + 1} - \sqrt{l_2 + 1/2}\right)\abs{V_2} \geq \left(\sqrt{l_1 + 1/4} - \sqrt{l_1}\right)\abs{V_1},\) where \(l_1 := \min_{v \in V_1} l(v);\) and \(l_2 := \max_{v \in V_2} l(v);\)
        \item \(2\left(\sqrt{n} - \sqrt{n-1/2}\right)\abs{V_2} \geq \abs{V_1}.\)
    \end{enumerate}
    
\end{theorem}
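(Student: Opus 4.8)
Write $B_{\mathrm{TP}}:=\sum_{v\in V'}\sqrt{4l(v)+\delta(v)}$ for the bound supplied by Theorem~\ref{TP}. The key observation is that, since $d(v)=l(v)+\delta(v)$, one has $2\sqrt{d(v)-1}=\sqrt{4l(v)+4(\delta(v)-1)}$, so that $f(l(v),\delta(v))=2\sqrt{d(v)-1}-\sqrt{4l(v)+\delta(v)}$ and hence
\[
    \sum_{v\in V'}f(l(v),\delta(v))=\sum_{v\in V'}2\sqrt{d(v)-1}-B_{\mathrm{TP}}.
\]
Thus condition i) is literally the inequality $B_{\mathrm{TP}}\le\sum_{v\in V'}2\sqrt{d(v)-1}=\sum_{i=1}^{n}2\sqrt{d_i-1}$ (leaves contribute $0$ to the last sum since $d_i=1$ there), and this last quantity is at most the right-hand side of \eqref{eq:AD2}; so i) already gives that Theorem~\ref{TP} improves Proposition~\ref{eq:AD}. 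It therefore suffices to prove that each of ii), iii), iv) implies i), after handling one degenerate case.

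That case is $T$ a star on $n$ vertices, say with center of degree $\Delta=n-1\ge2$: then the center is the unique inner vertex, it has $\delta=0$, and so it lies in neither $V_1$ nor $V_2$, whence ii)--iv) hold vacuously; moreover $\energy T=2\sqrt{\Delta}$ by Lemma~\ref{lema:weighedstar}, while the bound of Proposition~\ref{eq:AD} is $2\sqrt{\Delta-1}+1>2\sqrt{\Delta}$ (because $2(\sqrt{\Delta}-\sqrt{\Delta-1})=2/(\sqrt{\Delta}+\sqrt{\Delta-1})<1$ for $\Delta\ge2$), so the conclusion holds. From now on assume $T$ is not a star; then no inner vertex can have all of its neighbors leaves (connectedness would force $T$ to be a star), so $\delta(v)\ge1$ for every $v\in V'$ and $V'=V_1\sqcup V_2$.

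Next we show ii) implies i). A direct computation gives $\partial_y f(x,y)=\frac{2}{\sqrt{4x+4(y-1)}}-\frac{1}{2\sqrt{4x+y}}$, which is nonnegative whenever $4x+4(y-1)>0$ since the inequality reduces to $60x+12y+4\ge0$; hence $f(x,\cdot)$ is nondecreasing. Because $\delta(v)=1$ on $V_1$ and $\delta(v)\ge2$ on $V_2$, this yields $\sum_{v\in V'}f(l(v),\delta(v))\ge\sum_{v\in V_1}f(l(v),1)+\sum_{v\in V_2}f(l(v),2)$, so ii) implies i).

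Finally we show that iii) and iv) each imply ii). Using $\sqrt{4l+1}=2\sqrt{l+1/4}$ and $\sqrt{4l+2}=2\sqrt{l+1/2}$ gives the explicit forms $f(l,1)=-2(\sqrt{l+1/4}-\sqrt{l})\le0$ and $f(l,2)=2(\sqrt{l+1}-\sqrt{l+1/2})\ge0$. Since $t\mapsto\sqrt{t+c}-\sqrt{t}=c/(\sqrt{t+c}+\sqrt{t})$ is nonincreasing in $t\ge0$ for each fixed $c>0$, on $V_1$ (where $l(v)\ge l_1$) we get $f(l(v),1)\ge-2(\sqrt{l_1+1/4}-\sqrt{l_1})$, and on $V_2$ (where $l(v)\le l_2$) we get $f(l(v),2)\ge2(\sqrt{l_2+1}-\sqrt{l_2+1/2})$; summing over $V_1\sqcup V_2$ and dividing by $2$ shows that iii) implies ii), the subcases $V_1=\emptyset$ or $V_2=\emptyset$ being immediate. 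For iv) one uses instead the cruder estimates $f(l,1)\ge-1$, which is equivalent to $\sqrt{4l+1}\le2\sqrt{l}+1$ and holds for all $l\ge0$, together with $f(l(v),2)=1/(\sqrt{l(v)+1}+\sqrt{l(v)+1/2})\ge1/(\sqrt n+\sqrt{n-1/2})=2(\sqrt n-\sqrt{n-1/2})$ for $v\in V_2$, which is valid because $l(v)\le d(v)\le n-1$; summing then gives $\sum_{v\in V_1}f(l(v),1)+\sum_{v\in V_2}f(l(v),2)\ge-\abs{V_1}+2(\sqrt n-\sqrt{n-1/2})\abs{V_2}$, which is $\ge0$ exactly under hypothesis iv). Beyond the bookkeeping of the first paragraph, the only substantive point is the monotonicity of $f$ in its second variable, which is the mechanism passing from i) to the more explicit criteria ii)--iv); the star and empty-set cases are routine checks.
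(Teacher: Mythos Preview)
Your proof is correct and follows essentially the same approach as the paper: identify condition i) with the inequality $B_{\mathrm{TP}}\le\sum_v 2\sqrt{d(v)-1}$, then use the monotonicity of $f$ in its second variable to pass from ii) to i), and monotonicity in the first variable (via the explicit formulas $f(l,1)$ and $f(l,2)$) to handle iii) and iv). The differences are minor: you go directly from iv) to ii) rather than through iii) as the paper does, and you are more careful than the paper in isolating the star case, where the unique inner vertex has $\delta=0$ and hence lies in neither $V_1$ nor $V_2$ (so that ii)--iv) are vacuous and i) actually fails, yet the conclusion still holds).
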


\begin{proof}
    We have that Theorem \ref{TP} improves Proposition \ref{eq:AD} for the energy of \(T\) if and only if
    \[
        \sum_{v \in V'} \sqrt{3l(v) + d(v)} \leq \sum_{v \in V} 2\sqrt{d(v) - 1} + 2\left(\sqrt{\Delta} - \sqrt{\Delta - 1}\right),
    \]
    where \(\Delta\) is the maximum degree of the vertices in \(T\).
    
    Using that \(\sqrt{d(v) - 1} = 0\) for every \(v \in L\) and \(d(v) = \delta(v) + l(v)\), we obtain that the above inequality is equivalent to
    \begin{equation}\label{AD-TP}
        0 \leq 2\left(\sqrt{\Delta} - \sqrt{\Delta - 1}\right) + \sum_{v \in V'} \sqrt{4l(v) + 4(\delta(v) - 1)} - \sqrt{4l(v) + \delta(v)}.
    \end{equation}
    In particular, if the inequality
    \[
        \sum_{v \in V'} f(l(v),\delta(v)) \geq 0
    \]
    holds, the inequality in \eqref{AD-TP} is satisfied. In consequence, Theorem \ref{TP} improves Proposition \ref{eq:AD} for the energy of \(T\).

    We will show that condition ii) implies condition i), condition iii) implies condition ii) and condition iv) implies condition iii) to complete the proof.
    
    Notice now that \(f(x,y)\) is a decreasing function over \(x\) for \(y \geq 4/3\), it is an increasing function over \(x\) for \(y \leq 4/3\) and it is an increasing function over \(y\) for every \(x \geq 1\). This implies the following inequalities:
    \begin{align*}
        \sum_{v \in V_1} f(l(v),\delta(v)) \geq \sum_{v \in V_1} f(l(v),1), \qquad \sum_{v \in V_2} f(l(v),\delta(v)) \geq \sum_{v \in V_2} f(l(v),2).
    \end{align*}
    It means condition ii) implies condition i).

    In a similar way, we have
    \[
        \left(\sqrt{l_1 + 1/4} - \sqrt{l_1}\right)\abs{V_1} = -f(l_1,1)\abs{V_1} \geq \sum_{v \in V_1} -f(l(v),1)
    \]
    and
    \[
         \sum_{v \in V_2} f(l(v),2) \geq f(l_2, 2)\abs{V_2} = \left(\sqrt{l_2 + 1} - \sqrt{l_2 + 1/2}\right)\abs{V_2},
    \]
    i.e., condition iii) implies condition ii).

    Finally, we have that
    \[
        \sqrt{l_1 + 1} - \sqrt{l_1 + 1/2} \geq \sqrt{n} - \sqrt{n-1/2} \qquad \mbox{and} \qquad 1/2 \geq \sqrt{l_2 + 1/4} - \sqrt{l_2}
    \]
    since \(0 \leq l(v) \leq n-1\). In consequence, condition iv) implies condition iii).
    
\end{proof}

Having established general properties of our bound we proceed to apply them to random graphs.

\section{Energy for Barab\'asi-Albert Trees and ER graphs} \label{BA_section}

One of the motivations for the main inequality is to give better bounds for random graphs which have many leaves or, alternatively, they are sparse graphs. We will consider two regimes where this hypothesis hold: trees chosen with the preferential model of Barab\'asi-Albert \cite{Barabasi99} and ER graphs \cite{Erdos59} in the sparse regime. 

Let consider a sequence of graphs \((G_n)_{n \in \N} = ((V_n,E_n))_{n \in \N}\) where $G_n$ is a graph in $n$ nodes for every $n \in \N$. In the case of trees, we prefer the notation $(T_n)_{n \in \N}$ instead. We define, for every $n \in \N$,
\[
    N_k(n) := \{v \in V_n : d(v) = k\}
\]
to be the set of vertices of degree \(k\) in \(G_n\) and
\[
    N_{k,1}(n) := \{v \in L_n : w \in N(v),\, d(w) = k\}
\]
the set of leafs in \(G_n\) whose neighbor has degree \(k\).

\subsection{Barab\'asi-Albert Trees}

In that case of Barab\'asi-Albert trees, the energy is asymptotically smaller than the size of the tree with probability tending to 1 as the size tends to infinity, as proved in \cite{Arizmendi22}. In this section  we improve the bound obtained in \cite{Arizmendi22}, getting closer to the actual limit of the energy. 

In this section, we will consider a sequence of trees $(T_n)_{n \in \N}$ as before where $T_n$ is a tree typical chosen with Barab\'asi-Albert model. For this type of random trees, the asymptotic distribution of vertex of degree $k$ and edges joining a vertex of degree $k$ and a leaf can by understood explicitly. 

\begin{proposition}\label{BA-coefficients} 
\cite{Bollobas01, McDonald}
With high probability \(\abs{N_k(n)}/n \to n_k\) and \(\abs{N_{k,1}(n)}/n \to n_{k,1}\) when \(n \to \infty\), where
\[
    n_k = \frac{4}{k(k+1)(k+2)}, \qquad n_{k,1} = \frac{2(k+2)(k+3)-24}{k(k+1)(k+2)(k+3)}.
\]
\end{proposition}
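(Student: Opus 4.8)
The plan is to split the statement into two standard tasks: (i) prove that the \emph{expected} proportions $\mathbb{E}\abs{N_k(n)}/n$ and $\mathbb{E}\abs{N_{k,1}(n)}/n$ converge to the stated constants, and (ii) upgrade this to a high-probability statement by concentration. This is exactly the strategy of the degree-sequence analysis in \cite{Bollobas01}, with the leaf-neighbour refinement carried out in \cite{McDonald}, so one legitimate route is simply to quote those papers; below I sketch the self-contained argument.

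For (i), recall that a Barab\'asi--Albert tree on $n$ vertices has $n-1$ edges, hence total degree $2(n-1)$, so when vertex $n+1$ is inserted it attaches to a fixed vertex $v$ with probability $d(v)/\bigl(2(n-1)\bigr)$. Writing $D_k(n):=\abs{N_k(n)}$ and conditioning on $\mathcal F_n$ (the tree after $n$ steps), one gets, for $k\ge 2$,
\[
    \mathbb{E}\bigl[D_k(n+1)\mid\mathcal F_n\bigr]=\Bigl(1-\tfrac{k}{2(n-1)}\Bigr)D_k(n)+\tfrac{k-1}{2(n-1)}D_{k-1}(n),
\]
and $\mathbb{E}[D_1(n+1)\mid\mathcal F_n]=\bigl(1-\tfrac{1}{2(n-1)}\bigr)D_1(n)+1$. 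Taking expectations and using the elementary fact that a recursion of the form $a_{n+1}=(1-\alpha/n)a_n+\gamma+o(1)$ forces $a_n/n\to\gamma/(1+\alpha)$, an induction on $k$ gives $n_1=2/3$ and $n_k=\tfrac{k-1}{k+2}\,n_{k-1}$, whose solution is $n_k=4/\bigl(k(k+1)(k+2)\bigr)$. For $M_k(n):=\abs{N_{k,1}(n)}$ the bookkeeping is more delicate: attaching vertex $n+1$ to a vertex $u$ (a) creates a new leaf whose neighbour has degree $d(u)+1$, (b) moves every old leaf of $u$ from $N_{d(u),1}$ to $N_{d(u)+1,1}$, and (c) if $u$ itself was a leaf, removes it from the class of its former neighbour. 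Collecting these contributions yields, for $k\ge 2$,
\[
    \mathbb{E}\bigl[M_k(n+1)\mid\mathcal F_n\bigr]=\Bigl(1-\tfrac{k+1}{2(n-1)}\Bigr)M_k(n)+\tfrac{k-1}{2(n-1)}\bigl(D_{k-1}(n)+M_{k-1}(n)\bigr),
\]
together with $M_1(n)=0$ for $n\ge 3$ (a leaf adjacent to a leaf would be a $K_2$-component). Feeding in the already-computed $n_k$ and applying the same recursion lemma gives $m_1=0$ and $(k+3)\,m_k=(k-1)\bigl(n_{k-1}+m_{k-1}\bigr)$, and one checks directly that this is solved by $m_k=\bigl(2(k+2)(k+3)-24\bigr)/\bigl(k(k+1)(k+2)(k+3)\bigr)=n_{k,1}$.

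For (ii), fix $k$, expose the attachment choices $X_2,\dots,X_n$ (where $X_i$ is the vertex to which $i$ attaches) one at a time, and form the Doob martingale $Z_i=\mathbb{E}\bigl[D_k(n)\mid X_2,\dots,X_i\bigr]$, and likewise for $M_k$. The key estimate is that resampling a single $X_i$ can be coupled so that the two preferential-attachment processes differ at every later step only in the degrees of a bounded number of vertices, whence $D_k(n)$ and $M_k(n)$ change by $O(1)$; Azuma--Hoeffding then gives fluctuations of order $\sqrt{n\log n}=o(n)$, so $D_k(n)/n$ and $M_k(n)/n$ concentrate around their means, which by (i) tend to $n_k$ and $n_{k,1}$. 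Combining (i) and (ii) proves the proposition.

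I expect the real obstacle to be the bounded-differences estimate in (ii): preferential attachment is path-dependent, so perturbing one choice can cascade, and one must build the coupling carefully (as in \cite{Bollobas01}) to see that the effect on $\abs{N_k(n)}$ and $\abs{N_{k,1}(n)}$ remains $O(1)$; the expectation computation in (i), by contrast, is just careful accounting.
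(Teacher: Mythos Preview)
The paper does not give its own proof of this proposition: it is quoted with citations to \cite{Bollobas01} and \cite{McDonald} and then used as a black box in the proof of Theorem~\ref{Asym_BA}. Your sketch is precisely the argument carried out in those references. The expectation computation in part (i) is correct in every detail --- the one-step recursions for $D_k$ and $M_k$, the limit lemma $a_n/n\to\gamma/(1+\alpha)$, the induction giving $n_k=\tfrac{k-1}{k+2}n_{k-1}$ and $(k+3)m_k=(k-1)(n_{k-1}+m_{k-1})$, and the verification of the closed forms all check. For part (ii) your caveat is exactly on target and is the only place where more needs to be said: the Azuma argument of \cite{Bollobas01} gives $O(1)$ bounded differences for $D_k$ via the LCD coupling, but for $M_k$ the naive bound is not $O(1)$, since perturbing the degree of a single vertex $u$ reclassifies \emph{all} of its pendant leaves simultaneously, and $l(u)$ is not uniformly bounded. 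Handling this (either by a refined coupling, a truncation at the typical maximal degree, or a direct second-moment computation for the joint degree of adjacent vertices) is what \cite{McDonald} supplies. With that proviso, your proposal matches what the paper is deferring to.
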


Using this proposition and the bound obtained in Theorem \ref{TP}, we achieve to improve the bound for the asymptotic behavior of the energy for a tree following the model of Barab\'asi-Albert.

\begin{theorem}\label{Asym_BA}
With high probability, as $n$ tends to infinity, the next inequality follows
$$\limsup_{n\to\infty} \frac{\energy{T_n}}{n} \leq 0.96.$$

\end{theorem}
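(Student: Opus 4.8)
The plan is to feed the local estimate of Theorem~\ref{TP} into the asymptotic degree statistics of Proposition~\ref{BA-coefficients}. For $n$ large enough, $T_n$ is a connected tree on $n\ge 3$ vertices, so Theorem~\ref{TP} applies and gives
\[
    \mathcal{E}(T_n)\;\le\;\sum_{v\in V'(T_n)}\sqrt{3\,l(v)+d(v)}\;=\;\sum_{k\ge 2}\ \sum_{\substack{v\in V_n\\ d(v)=k}}\sqrt{3\,l(v)+k}.
\]
The right‑hand side depends on the individual leaf‑counts $l(v)$, which we do not control directly; what Proposition~\ref{BA-coefficients} controls is $\abs{N_k(n)}$ and $\abs{N_{k,1}(n)}$. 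The key observation is that $\sum_{v:\,d(v)=k} l(v)=\abs{N_{k,1}(n)}$, since each leaf counted in $N_{k,1}(n)$ is a leaf‑neighbour of exactly one degree‑$k$ vertex. Hence, by concavity of the square root (equivalently Cauchy--Schwarz inside each degree class),
\[
    \sum_{\substack{v\in V_n\\ d(v)=k}}\sqrt{3\,l(v)+k}\;\le\;\sqrt{\abs{N_k(n)}\bigl(3\abs{N_{k,1}(n)}+k\abs{N_k(n)}\bigr)}.
\]

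Summing over $k$ and dividing by $n$ gives $\frac1n\mathcal{E}(T_n)\le\sum_{k\ge 2}\sqrt{\tfrac{\abs{N_k(n)}}{n}\bigl(3\tfrac{\abs{N_{k,1}(n)}}{n}+k\tfrac{\abs{N_k(n)}}{n}\bigr)}$. To pass to the limit I truncate at a large $K$. Since $l(v)\le d(v)$ we have $\sqrt{3l(v)+k}\le 2\sqrt{k}$, and the handshake identity $\sum_k k\abs{N_k(n)}=2\abs{E_n}=2(n-1)$ yields
\[
    \frac1n\sum_{k\ge K}\ \sum_{\substack{v\in V_n\\ d(v)=k}}\sqrt{3l(v)+k}\;\le\;\frac1n\sum_{k\ge K}2\sqrt{k}\,\abs{N_k(n)}\;\le\;\frac{2}{\sqrt{K}}\cdot\frac{\sum_k k\abs{N_k(n)}}{n}\;\le\;\frac{4}{\sqrt{K}},
\]
uniformly in $n$. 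For the finitely many terms $2\le k<K$ one uses $\abs{N_k(n)}/n\to n_k$ and $\abs{N_{k,1}(n)}/n\to n_{k,1}$ from Proposition~\ref{BA-coefficients}; these finitely many convergences may be combined, so with high probability
\[
    \limsup_{n\to\infty}\frac{\mathcal{E}(T_n)}{n}\;\le\;\sum_{2\le k<K}\sqrt{n_k\bigl(3n_{k,1}+kn_k\bigr)}+\frac{4}{\sqrt{K}}\;\le\;\sum_{k\ge 2}n_k\sqrt{\frac{3n_{k,1}}{n_k}+k}+\frac{4}{\sqrt{K}}.
\]
Letting $K\to\infty$ removes the error term. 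Now plug in $n_k=\frac{4}{k(k+1)(k+2)}$ and $n_{k,1}=\frac{2(k+2)(k+3)-24}{k(k+1)(k+2)(k+3)}$: one computes $\frac{n_{k,1}}{n_k}=\frac{(k-1)(k+6)}{2(k+3)}$, hence $\frac{3n_{k,1}}{n_k}+k=\frac{5k^2+21k-18}{2(k+3)}$, so that
\[
    \limsup_{n\to\infty}\frac{\mathcal{E}(T_n)}{n}\;\le\;\sum_{k\ge 2}\frac{4}{k(k+1)(k+2)}\sqrt{\frac{5k^2+21k-18}{2(k+3)}}.
\]
The general term is of order $k^{-5/2}$, so the series converges quickly; summing an explicit finite portion and bounding the tail by an integral of the form $\int c\,x^{-5/2}\,dx$ shows the total does not exceed $0.96$, which is the claim.

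I expect the main obstacle to be the two limit manipulations together with the sharpness of the final numerical estimate. Interchanging $\limsup_n$ with the sum over $k$ is not automatic, because the maximal degree of $T_n$ grows with $n$; this is exactly why the handshake‑identity tail bound is needed, and one should be careful that the relevant (countably many) convergences of Proposition~\ref{BA-coefficients} can be used on a common probability‑one event rather than merely fixing them one $k$ at a time. The crude step $\sqrt{3l(v)+k}\le 2\sqrt{k}$ is harmless only because it is applied to vertices carrying a vanishing fraction of the total degree mass $\sum_k k\abs{N_k(n)}$. Finally, the constant produced by the limiting series is close to the threshold $0.96$, so the concluding numerical bound must be done with enough terms and an honest tail estimate; any estimate of $\sqrt{3l(v)+k}$ within a degree class looser than the sharp Jensen/Cauchy--Schwarz one would be too weak to close the gap.
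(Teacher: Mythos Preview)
Your argument is correct and follows essentially the same route as the paper: apply Theorem~\ref{TP}, use Cauchy--Schwarz inside each degree class to reduce to $\abs{N_k(n)}$ and $\abs{N_{k,1}(n)}$, truncate, invoke Proposition~\ref{BA-coefficients} on the finite part, and evaluate the resulting series $\sum_{k\ge 2}\frac{4}{k(k+1)(k+2)}\sqrt{\frac{5k^2+21k-18}{2(k+3)}}\approx 0.95999$. The only notable difference is the tail control: the paper imports the bound $\frac{1}{n}\sum_{k>m}\abs{N_k(n)}\sqrt{k}\le \frac{2}{m}+o(1)$ from \cite{Arizmendi22}, whereas you obtain the weaker but fully self-contained $4/\sqrt{K}$ directly from the handshake identity---either suffices since only $o_K(1)$ is needed.
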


\begin{proof}
By the Theorem \ref{TP}, we get the next inequality:
\begin{align*}
    \energy{T_n} &\leq \sum_{v \in V'} \sqrt{3l(v) + d(v)}\\
    &\leq \sum_{k = 2}^n \sum_{v \in N_k(n)} \sqrt{3l(v) + k}.
\end{align*}

For each \(k \geq 2\), by using Cauchy-Schwartz inequality,
\begin{align*}
    \sum_{v \in N_k(n)} \sqrt{3l(v) + k} &\leq \sqrt{\abs{N_k(n)}\left(\sum_{v \in N_k(n)} 3l(v) + k\right)}\\
    &= \sqrt{\abs{N_k(n)}\left(3\abs{N_{k,1}(n)} + k\abs{N_k(n)}\right)}\\
    &= \abs{N_k(n)}\sqrt{3\frac{\abs{N_{k,1}(n)}}{\abs{N_k(n)}} + k}.
\end{align*}

Combining the previous calculations with Proposition \ref{BA-coefficients}, for any fixed \(m \geq 2\) and large \(n\), we get
\begin{align*}
    \frac{1}{n}\energy{T_n} &\leq \sum_{k = 2}^{n} \frac{\abs{N_k(n)}}{n} \sqrt{3\frac{\abs{N_{k,1}(n)}}{\abs{N_k(n)}} + k}\\
    &= \sum_{k = 2}^{m} n_k \sqrt{3\frac{n_{k,1}}{n_k} + k} + o(1) + \sum_{k = m+1}^n \frac{\abs{N_k(n)}}{n} \sqrt{3\frac{\abs{N_{k,1}(n)}}{\abs{N_k(n)}} + k}.
\end{align*}

Note that, for \(v \in N_k(n)\), we have at most \(d(v) = k\) neighbors that are leaf. Then, we get
\[
    \abs{N_{k,1}(n)} \leq k\abs{N_k(n)}, \qquad\mbox{for every } k \geq 2.
\]
Moreover, we have the next inequality (see \cite{Arizmendi22})
\[
    \frac{1}{n}\sum_{k = m+1}^n \abs{N_k(n)}\sqrt{k} \leq \frac{2}{m} + o(1).
\]

This way,
\begin{align*}
    \frac{1}{n}\energy{T_n} &\leq \sum_{k = 2}^{m} n_k \sqrt{3\frac{n_{k,1}}{n_k} + k} + o(1) + \frac{2}{n}\sum_{k = m+1}^n \abs{N_k(n)}\sqrt{k}\\
    &= \sum_{k = 2}^m n_k \sqrt{3\frac{n_{k,1}}{n_k} + k} + \frac{4}{m} + o(1),
\end{align*}
where
\[
    n_k = \frac{4}{k(k+1)(k+2)}, \qquad n_{k,1} = \frac{2(k+2)(k+3)-24}{k(k+1)(k+2)(k+3)}.
\]

Since \(m\) is fixed but arbitrary, we obtain the asymptotic bound
\begin{align*}
    \limsup_{n \to \infty} \frac{1}{n}\energy{T_n} &\leq \sum_{k = 2}^{\infty} n_k\sqrt{3\frac{n_{k,1}}{n_k} + k}\\
    &= \sum_{k = 2}^{\infty} \frac{4}{k(k+1)(k+2)} \sqrt{\frac{k(5k+21)-18}{2(k+3)}}.
\end{align*}

Notice that, for a fixed $m \geq 21$,
\begin{align*}
    \delta &:= \sum_{k = m}^{\infty} n_k \sqrt{3\frac{n_{k,1}}{n_k} + k}
    \leq \sum_{k = m}^{\infty} \frac{4\sqrt{3}}{k^{5/2}}\\
    &\leq \int_{m-1}^{\infty} \frac{4\sqrt{3}}{x^{5/2}}dx = \frac{8\sqrt{3}}{3(m-1)^{3/2}}.
\end{align*}

Taking $m = 10^5 + 1$, we have $\delta = \frac{8\sqrt{3}}{3\cdot 10^{15/2}} \approx 1.46059 \cdot 10^{-7}$. Therefore,
\[
    \limsup_{n \to \infty} \frac{1}{n}\energy{G} \leq \sum_{k = 2}^{m} n_k \sqrt{3\frac{n_{k,1}}{n_k} + k} + \delta \approx 0.95999.
\]
    
\end{proof}

Notice from the proof that we used the following observation which we state as a corollary, for further reference.

\begin{corollary}\label{cor-BA-model}
For a graph $G = (E,V)$ it is satisfied that
\[
    \energy{G} \leq 2e_{11} + \sum_{k = 1}^n n_k\sqrt{3\frac{n_{k,1}}{n_k} + k},
\]
where $n_k$ is the number of nodes in $G$ with degree $k$ and $n_{k,1}$ is the number of leaves with only neighbor having degree $k$.

\end{corollary}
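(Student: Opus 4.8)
The plan is to read off the statement directly from the chain of estimates already carried out inside the proof of Theorem \ref{Asym_BA}, now phrased for an arbitrary graph \(G\) on \(n\) vertices rather than a Barab\'asi--Albert tree. First I would apply Theorem \ref{TPG} to obtain \(\energy{G} \leq 2 e_{11} + \sum_{v \in V'} \sqrt{3 l(v) + d(v)}\). Then I would partition the inner vertex set according to degree: writing \(N_k\) for the set of vertices of \(G\) of degree \(k\), we have \(V' = \bigsqcup_{k \geq 2} N_k\), so \(\sum_{v \in V'} \sqrt{3l(v)+d(v)} = \sum_{k \geq 2} \sum_{v \in N_k} \sqrt{3 l(v) + k}\).

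The next step is the Cauchy--Schwarz inequality applied inside each degree class, exactly as in the proof of Theorem \ref{Asym_BA}: \(\sum_{v \in N_k} \sqrt{3l(v)+k} \leq \sqrt{\abs{N_k}\sum_{v \in N_k}(3l(v)+k)} = \sqrt{\abs{N_k}\bigl(3\sum_{v \in N_k} l(v) + k\abs{N_k}\bigr)}\). The one bookkeeping point is to identify \(\sum_{v \in N_k} l(v)\) with \(n_{k,1} = \abs{N_{k,1}}\): every leaf whose unique neighbor \(u\) has degree \(k\) contributes exactly \(1\) to \(l(u)\) with \(u \in N_k\), and conversely every leaf neighbor counted in some \(l(v)\), \(v \in N_k\), is precisely such a leaf, so the two counts agree. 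Hence the inner sum is at most \(\sqrt{n_k(3 n_{k,1} + k n_k)} = n_k \sqrt{3 n_{k,1}/n_k + k}\), where the term is interpreted as \(0\) when \(n_k = 0\) (which is consistent, since \(n_k = 0 \Rightarrow n_{k,1} = 0\)).

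Finally, summing over \(k \geq 2\) and observing that the \(k = 1\) term \(n_1\sqrt{3 n_{1,1}/n_1 + 1}\) is nonnegative, I may enlarge the range of summation to \(1 \leq k \leq n\) without invalidating the inequality, which yields \(\energy{G} \leq 2 e_{11} + \sum_{k=1}^n n_k \sqrt{3 n_{k,1}/n_k + k}\), as claimed. There is no genuine obstacle: this corollary is a repackaging of the estimate already established, and the only items needing care are the counting identity \(\sum_{v \in N_k} l(v) = n_{k,1}\), the convention for the summand when \(n_k = 0\), and the remark that extending the index down to \(k = 1\) only weakens the bound.
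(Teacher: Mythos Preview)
Your proposal is correct and is exactly the approach the paper intends: the corollary is stated without its own proof, with the remark that it is extracted from the chain of estimates in the proof of Theorem~\ref{Asym_BA}. You reproduce that chain (Theorem~\ref{TPG}, the degree-class partition, Cauchy--Schwarz inside each class, and the identification \(\sum_{v\in N_k} l(v)=n_{k,1}\)), correctly replacing Theorem~\ref{TP} by Theorem~\ref{TPG} to handle a general graph and noting that extending the sum to \(k=1\) only weakens the bound.
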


\begin{figure}[H]
    \centering
    \includegraphics[width = 0.7\linewidth]{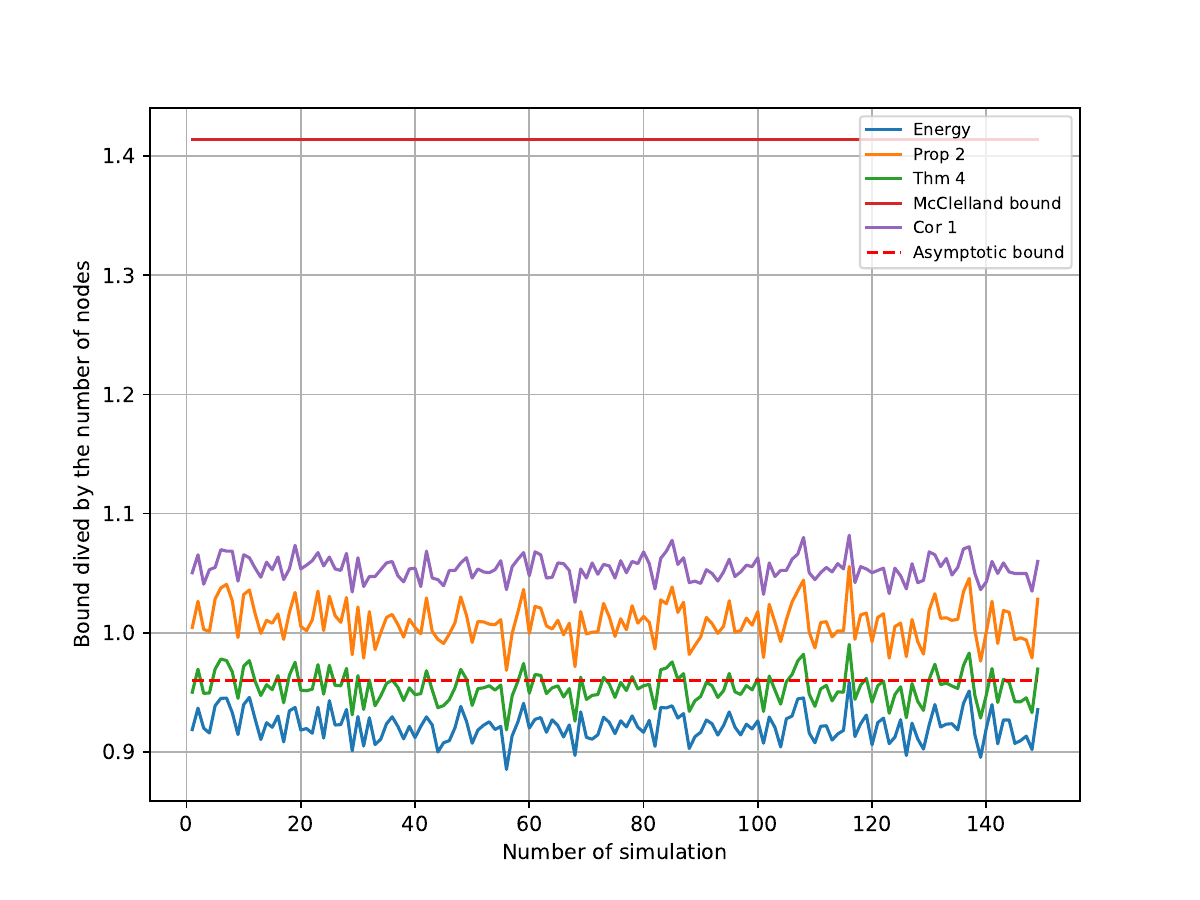}
    \caption{Energy / Size for 200 random trees of size \(n = 2000\) following the Barab\'asi-Albert model compared with bounds from Proposition \ref{eq:AD}, Theorem \ref{TP}, McClleland's bound, Corollary \ref{global_bound} and the asymptotic bound from Theorem \ref{Asym_BA}.}
\end{figure}

\subsection{Energy for Erd\"os-R\'enyi Graphs}
\label{ER}

Similarly to the case for Barab\'asi-Albert trees, above, we will derive an asymptotic bound for Erd\"os-R\'enyi graphs in the sparse regime. Let us mention that the case $p_n\to p>0$ follows from Wigner's Semicircle law as observed by Nikiforov in \cite{Nikiforov}.

In this section, we will consider a sequence $(G_n)_{n \in \N}$ as before where $G_n$ is a graph typical chosen with Erd\"os-R\'enyi model \(G(n,p_n)\), such that \(np_n \to \lambda\) when \(n \to \infty\) for some constant \(\lambda > 0\).

The following proposition tells what is the asymptotic behavior of \(\abs{N_k(n)}\) and  \(\abs{N_{k,1}(n)}\) when \(n\) tends to \(\infty\). The result is probably known to the expert and our proof is standard, but we include it for the convenience of the reader.
\begin{proposition}\label{ER-deg}
    With high probability \(\abs{N_k(n)}/n \to n_k\) and \(\abs{N_{k,1}(n)}/n \to n_{k,1}\) when \(n \to \infty\), where
    \[
    n_k = \frac{\lambda^ke^{-\lambda}}{k!}, \qquad n_{k,1} = \frac{\lambda^ke^{-2\lambda}}{(k-1)!}.
    \]
\end{proposition}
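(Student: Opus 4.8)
Here is a plan for proving Proposition~\ref{ER-deg}.

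\medskip

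\noindent\emph{Overview and first moments.} The plan is a standard first‑ and second‑moment argument; only the variance estimate for $\abs{N_{k,1}(n)}$ requires real work. First I would compute the expectations. Fix a vertex $v$ of $G_n$; its degree $d(v)$ has the $\operatorname{Bin}(n-1,p_n)$ distribution, which converges in law to a Poisson variable of mean $\lambda$ since $np_n\to\lambda$. Hence
\[
\frac{\E{\abs{N_k(n)}}}{n}=\Pp{d(v)=k}=\binom{n-1}{k}p_n^{k}(1-p_n)^{n-1-k}\ \longrightarrow\ \frac{\lambda^{k}e^{-\lambda}}{k!}=n_k .
\]
For the second quantity, write $Y_v:=\mathbf{1}\!\left[\,v\in L_n\text{ and the unique neighbour }w\text{ of }v\text{ has }d(w)=k\,\right]$, so that $\abs{N_{k,1}(n)}=\sum_{v}Y_v$ and $Y_v=\sum_{w\ne v}\mathbf{1}[N(v)=\{w\}]\,\mathbf{1}[d(w)=k]$. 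A direct count gives, for each fixed $w\ne v$,
\[
\Pp{N(v)=\{w\},\ d(w)=k}=p_n(1-p_n)^{n-2}\binom{n-2}{k-1}p_n^{k-1}(1-p_n)^{n-k-1},
\]
and summing over the $n-1$ possible neighbours $w$ and letting $n\to\infty$ yields
\[
\frac{\E{\abs{N_{k,1}(n)}}}{n}=\E{Y_v}\ \longrightarrow\ \lambda\cdot e^{-\lambda}\cdot\frac{\lambda^{k-1}}{(k-1)!}\cdot e^{-\lambda}=\frac{\lambda^{k}e^{-2\lambda}}{(k-1)!}=n_{k,1}.
\]

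\medskip

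\noindent\emph{Concentration of $\abs{N_k(n)}$.} I would then obtain concentration from Chebyshev's inequality, for which it suffices to bound the variances by $o(n^2)$. For $\abs{N_k(n)}=\sum_v\mathbf{1}[d(v)=k]$ this is easy: for $u\ne v$, conditioning on whether $\{u,v\}\in E_n$ makes $d(u)$ and $d(v)$ depend on disjoint families of independent edge indicators, hence conditionally independent, and a one‑line computation gives $\operatorname{Cov}\!\big(\mathbf{1}[d(u)=k],\mathbf{1}[d(v)=k]\big)=p_n(1-p_n)(a_n-b_n)^2$ with $a_n=\Pp{\operatorname{Bin}(n-2,p_n)=k-1}$, $b_n=\Pp{\operatorname{Bin}(n-2,p_n)=k}$, both at most $1$; so every covariance is $O(1/n)$ uniformly. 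Thus $\Var{\abs{N_k(n)}}=O(n)+\binom{n}{2}O(1/n)=O(n)$, and since $\E{\abs{N_k(n)}}/n\to n_k$, Chebyshev's inequality gives $\abs{N_k(n)}/n\to n_k$ with high probability.

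\medskip

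\noindent\emph{Concentration of $\abs{N_{k,1}(n)}$ — the main obstacle.} This is the delicate step, because $Y_v$ depends not only on the edges at $v$ but also on the edges incident to the \emph{random} neighbour of $v$, so distinct $Y_u,Y_v$ are coupled through a larger set of edges. I would expand
\[
\E{Y_uY_v}=\sum_{w\ne u,\ w'\ne v}\Pp{N(u)=\{w\},\ N(v)=\{w'\},\ d(w)=d(w')=k}
\]
and split according to which of $u,v,w,w'$ coincide. The ``generic'' terms, with $w\ne w'$ and $w\ne v$, $w'\ne u$, dominate: there the two events are independent apart from a coupling through the single potential edge $\{w,w'\}$ (all other shared edges being forced to a definite state by the events), and conditioning on that edge as above shows the generic terms sum to $\E{Y_u}\E{Y_v}+O(1/n)$. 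The degenerate terms — $w=w'$, or one of the two neighbours equal to the other marked vertex (possible only when $k=1$, giving a $K_2$ component) — contribute $O(1/n)$ altogether. Hence $\operatorname{Cov}(Y_u,Y_v)=O(1/n)$ uniformly in $u\ne v$, so $\Var{\abs{N_{k,1}(n)}}=O(n)$, and Chebyshev's inequality again gives $\abs{N_{k,1}(n)}/n\to n_{k,1}$ with high probability, completing the proof. As an alternative to the explicit covariance bookkeeping, one could invoke the local weak (Benjamini--Schramm) convergence of $G(n,p_n)$ to the Galton--Watson tree whose root has a Poisson$(\lambda)$ number of children and whose other vertices have independent Poisson$(\lambda)$ numbers of children: since $\abs{N_k(n)}/n$ and $\abs{N_{k,1}(n)}/n$ are radius‑$1$ and radius‑$2$ local functionals, they converge in probability to the probability that the root has degree $k$, respectively that the root is a leaf whose neighbour has degree $k$, which evaluate to $n_k$ and $n_{k,1}$.
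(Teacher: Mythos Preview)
Your proposal is correct and follows essentially the same second-moment/Chebyshev strategy as the paper: compute $\Pp{v\in N_k(n)}$ and $\Pp{v\in N_{k,1}(n)}$, show they converge to $n_k$ and $n_{k,1}$, and then bound the pair probabilities $\Pp{u,v\in\cdot}$ by a case analysis to force the variance to $o(n^2)$. The only difference is in execution of the variance step---the paper writes out explicit binomial sums over the number of shared neighbours and over whether $v'\sim w'$, whereas you phrase it as a conditional-independence covariance bound---but this is the same computation organized differently; your local-weak-convergence alternative is a genuinely distinct shortcut not used in the paper.
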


\begin{proof}
    Let \(n,k \in \N\) be fixed, with \(k < n\), and \(v \in V_n\). Notice that
    \[
        \Pp{v \in N_k(n)} = \binom{n-1}{k}p^k(1-p)^{n-1-k}
    \]
    and
    \[
        \lim_{n \to \infty} \Pp{v \in N_k(n)} = \lim_{n \to \infty} \frac{(n)_k}{n^k}\frac{(np)^k}{k!} \left(1 - \frac{np}{n}\right)^{n-1-k} = n_k.
    \]
    
    However,
    \[
        \abs{N_k(n)} = \sum_{v \in V_n} \mathbbm{1}_{v \in N_k(n)}.
    \]
    Then,
    \[
        \E{\frac{\abs{N_k(n)}}{n}} = \frac{1}{n}\sum_{v \in V_n} \E{\mathbbm{1}_{v \in N_k(n)}} = \Pp{v_0 \in N_k(n)},
    \]
    where \(v_0 \in V_n\) is a fixed vertex. In consequence,
    \[
        \lim_{n \to \infty} \E{\frac{\abs{N_k(n)}}{n}} = \lim_{n \to \infty} \Pp{v \in N_k(n)} = n_k.
    \]

    Let \(v,w \in V_n\) be distinct vertices. Considering the possibilities: \(v \sim w\) or \(v \not\sim w\); and the number of shared neighbors \(j\) between \(v\) and \(w\), we obtained
    \begin{align*}
        \Pp{v,w \in N_k(n)} &= \sum_{j = 0}^k \binom{n-2}{j}\binom{n-2-j}{k-j}\binom{n-2-k}{k-j}p^{2k}(1-p)^{2n-2k-3}\\
        &\quad + \sum_{j = 0}^{k-1} \binom{n-2}{j}\binom{n-2-j}{k-j-1}\binom{n-1-k}{k-j-1}p^{2k-1}(1-p)^{2n-2k-2)}.
    \end{align*}
    Then, using a similar limit to the previously used, we get
    \[
        \lim_{n \to \infty} \Pp{v,w \in N_k(n)} = n_k^2.
    \]
    
    However,
    \begin{align*}
        \Var{\frac{\abs{N_k(n)}}{n}} &= \frac{1}{n^2}\left[\sum_{v \in V_n}\E{\mathbbm{1}_{v \in N_k(n)}} + \sum_{\substack{v,w \in V_n\\
        v \neq w}} \E{\mathbbm{1}_{v, w \in N_k(n)}}\right] - \E{\frac{\abs{N_k(n)}}{n}}^2\\
        &= \frac{1}{n}\Pp{v_0 \in N_k(n)} + \frac{n-1}{n}\Pp{v_0,w_0 \in N_k(n)} - \Pp{v \in N_k(n)}^2,
    \end{align*}
    where \(v_0,w_0 \in V\) are fixed. In consequence,
    \[
        \lim_{n \to \infty} \Var{\frac{\abs{N_k(n)}}{n}} = 0.
    \]
    Therefore, \(\abs{N_k(n)}/n\) tends to \(n_k\) almost surely while \(n\) tends to \(\infty\).

    Let again \(n,k \in \N\) be fixed, with \(k < n\), and \(v \in V_n\). Note that
    \[
        \Pp{v \in N_{k,1}(n)} = (n-1)\binom{n-2}{k-1}p^k(1-p)^{2n-k-3}
    \]
    and
    \[
        \lim_{n \to \infty} \Pp{v \in N_{k,1}(n)} = n_{k,1}.
    \]

    Nevertheless,
    \[
        \abs{N_{k,1}(n)} = \sum_{v \in V_n} \mathbbm{1}_{v \in N_{k,1}(n)}.
    \]
    It follows that
    \[
        \E{\frac{\abs{N_{k,1}(n)}}{n}} = \frac{1}{n}\sum_{v \in V_n} \E{\mathbbm{1}_{v \in N_{k,1}(n)}} = \Pp{v \in N_{k,1}(n)}.
    \]
    As a result,
    \[
        \lim_{n \to \infty} \E{\frac{\abs{N_{k,1}(n)}}{n}} = \Pp{v \in N_{k,1}(n)} = n_{k,1}.
    \]

    Let \(v,w \in V_n\) be distinct. If \(v, w \in N_{k,1}(n)\), there exist \(v',w' \in V_n\) such that \(v \sim v'\), \(w \sim w\) and \(d(v') = k = d(w')\). We have several options:
    \begin{itemize}
        \item \(v' = w'\);
        \item \(v' \neq w'\) and \(v' \not\sim w'\);
        \item \(v' \neq w'\) and \(v' \sim w'\).
    \end{itemize}
    Considering the number of shared neighbors \(j\) and \(w'\), we get
    \begin{align*}
        \Pp{v,w \in N_{k,1}(n)} &= (n-2)\binom{n-3}{k-2}p^k(1-p)^{3n-k-4}\\
        &\quad + \sum_{j = 0}^{k-1} (n-2)(n-3)\binom{n-4}{j}\binom{n-j-4}{k-j-1}\binom{n-k-3}{k-j-1}p^{2k}(1-p)^{4n-2k-10}\\
        &\quad + \sum_{j = 0}^{k-2} (n-2)(n-3)\binom{n-4}{j}\binom{n-j-4}{k-j-2}\binom{n-k-2}{k-j-2}p^{2k-1}(1-p)^{4n-2k-9}.
    \end{align*}
    Using a similar limit to the previously used, we obtained
    \[
        \lim_{n \to \infty} \Pp{v,w \in N_{k,1}(n)} = n_{k,1}^2.
    \]

    However,
    \begin{align*}
        \Var{\frac{\abs{N_{k,1}(n)}}{n}} &= \frac{1}{n^2}\left[\sum_{v \in V}\E{\mathbbm{1}_{v \in N_{k,1}(n)}} + \sum_{\substack{v,w \in V\\
        v \neq w}} \E{\mathbbm{1}_{v, w \in N_{k,1}(n)}}\right] - \E{\frac{\abs{N_{k,1}(n)}}{n}}^2\\
        &= \frac{1}{n}\Pp{v_0 \in N_{k,1}(n)} + \frac{n-1}{n}\Pp{v_0,w_0 \in N_{k,1}(n)} - \Pp{v \in N_{k,1}(n)}^2,
    \end{align*}
    where \(v_0,w_0 \in V_n\) are fixed. In consequence,
    \[
        \lim_{n \to \infty} \Var{\frac{\abs{N_{k,1}(n)}}{n}} = 0.
    \]
    Therefore, \(\abs{N_{k,1}(n)}/n\) tends to \(n_{k,1}\) almost surely while \(n\) tends to \(\infty\).
    
\end{proof}


\begin{proposition}\label{ER-edges}
\(\abs{E_n}/n \to \lambda/2\) as \(n \to \infty\), almost surely.
\end{proposition}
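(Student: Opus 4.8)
The plan is to compute the mean and variance of $\abs{E_n}/n$ and apply Chebyshev's inequality, just as in the proof of Proposition \ref{ER-deg}. First I would write $\abs{E_n} = \sum_{\{v,w\}} \mathbbm{1}_{v \sim w}$, where the sum is over the $\binom{n}{2}$ unordered pairs of distinct vertices, each edge being present independently with probability $p_n$. Then $\E{\abs{E_n}} = \binom{n}{2} p_n$, so that
\[
    \E{\frac{\abs{E_n}}{n}} = \frac{(n-1)p_n}{2} \longrightarrow \frac{\lambda}{2}
\]
using the hypothesis $np_n \to \lambda$.

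Next I would bound the variance. Since the indicator variables $\mathbbm{1}_{v \sim w}$ over distinct pairs are mutually independent (edges in $G(n,p_n)$ are independent), $\Var{\abs{E_n}} = \binom{n}{2} p_n(1-p_n) \leq \binom{n}{2} p_n$. Hence
\[
    \Var{\frac{\abs{E_n}}{n}} \leq \frac{\binom{n}{2}p_n}{n^2} = \frac{(n-1)p_n}{2n} \longrightarrow 0,
\]
again using $np_n \to \lambda$. In fact $\Var{\abs{E_n}/n} = O(1/n)$, which will be important for the almost sure statement.

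Finally I would upgrade convergence in probability to almost sure convergence. The cheap route is Chebyshev plus Borel--Cantelli: since $\Var{\abs{E_n}/n} = O(1/n)$, the sum $\sum_n \Var{\abs{E_n}/n}$ diverges only like $\sum 1/n$, so a direct first Borel--Cantelli argument along the full sequence does not immediately work; instead one applies Chebyshev along the subsequence $n_j = j^2$ (for which $\sum_j \Var{\abs{E_{n_j}}/n_j} < \infty$), concludes $\abs{E_{n_j}}/n_j \to \lambda/2$ almost surely, and then controls the fluctuation between consecutive $n_j$ by monotonicity of $\abs{E_n}$ in $n$ together with $n_{j+1}/n_j \to 1$. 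Alternatively, and more simply, I would invoke the fact that the statement is understood here in the ``with high probability'' sense consistent with Proposition \ref{ER-deg}, so that Chebyshev's inequality alone, $\Pp{\,\abs{\,\abs{E_n}/n - (n-1)p_n/2\,} > \varepsilon\,} \leq \Var{\abs{E_n}/n}/\varepsilon^2 \to 0$, already suffices. The only mild obstacle is the bookkeeping in the subsequence argument if one insists on genuine almost sure convergence; everything else is a one-line moment computation.
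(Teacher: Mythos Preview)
The paper states this proposition without proof, so there is no argument in the text to compare against; I will comment on the soundness of your proposal on its own.

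Your first two moment computations are correct and standard: $\E{\abs{E_n}/n} = (n-1)p_n/2 \to \lambda/2$ and $\Var{\abs{E_n}/n} = \binom{n}{2}p_n(1-p_n)/n^2 = O(1/n)$. These immediately give convergence in $L^2$, hence in probability, and that is exactly the level of rigour the paper itself reaches in the proof of Proposition~\ref{ER-deg} (where only $\Var{\cdot}\to 0$ is shown before the phrase ``almost surely'' is asserted). Your second alternative, reading the claim as ``with high probability'', is therefore entirely in keeping with the paper's own standards and with how the result is actually used in Theorem~\ref{ER-bound1}.

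There is, however, a genuine gap in your first route to almost sure convergence. You propose to pass to the subsequence $n_j=j^2$, apply Borel--Cantelli there, and then interpolate by ``monotonicity of $\abs{E_n}$ in $n$''. But $\abs{E_n}$ is \emph{not} monotone in $n$: the graphs $G_n$ live on different vertex sets with different edge probabilities $p_n$, and no coupling is specified; even under the natural coupling via common uniforms $U_{ij}$, the fact that $p_n$ is decreasing means edges present in $G_n$ need not survive in $G_{n+1}$. So the interpolation step fails as written. A clean fix is to bypass the subsequence entirely: since $\abs{E_n}$ is a sum of independent Bernoulli variables, either a multiplicative Chernoff bound or a fourth-moment estimate (independence gives $\E{(\abs{E_n}-\E{\abs{E_n}})^4}=O(n^2)$) yields tail probabilities of order $O(n^{-2})$, which are summable, and Borel--Cantelli then delivers genuine almost sure convergence along the full sequence.
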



Now we proceed to bound the energy of the graph $G_n$, asymptotically.
\begin{theorem}\label{ER-bound1}
    With high probability,
  \begin{eqnarray}
    \limsup_{n \to \infty} \frac{\energy{G_n}}{n} &\leq& 2\lambda e^{-2\lambda} + e^{-\lambda}\sqrt{3e^{-\lambda} + 1}\sum_{k = 2}^{\infty} \frac{\lambda^k\sqrt{k}}{k!} 
\end{eqnarray}

\end{theorem}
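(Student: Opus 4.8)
The plan is to replay the proof of Theorem~\ref{Asym_BA}, substituting the degree statistics of the sparse Erd\"os--R\'enyi graph (Proposition~\ref{ER-deg}) for those of the Barab\'asi--Albert tree. First I would apply Theorem~\ref{TPG} and group the inner vertices of $G_n$ by their degree: for each $k\ge 2$, since every leaf has a unique neighbour we have $\sum_{v\in N_k(n)}l(v)=\abs{N_{k,1}(n)}$, so the Cauchy--Schwarz inequality gives
\[
  \sum_{v\in N_k(n)}\sqrt{3l(v)+k}\;\le\;\sqrt{\abs{N_k(n)}\sum_{v\in N_k(n)}\bigl(3l(v)+k\bigr)}\;=\;\abs{N_k(n)}\sqrt{\,3\,\frac{\abs{N_{k,1}(n)}}{\abs{N_k(n)}}+k\,}.
\]
Summing over $k\ge 2$ and adding the term $2e_{11}(G_n)$ from Theorem~\ref{TPG} yields $\energy{G_n}\le 2e_{11}(G_n)+\sum_{k\ge 2}\abs{N_k(n)}\sqrt{3\,\abs{N_{k,1}(n)}/\abs{N_k(n)}+k}$, the estimate underlying Corollary~\ref{cor-BA-model}.

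Next I would divide by $n$, fix $m\ge 2$, and split the sum at $k=m$. For the head $2\le k\le m$ there are only finitely many terms, and Proposition~\ref{ER-deg} gives, with high probability, $\abs{N_k(n)}/n\to n_k=\lambda^k e^{-\lambda}/k!$ together with $\abs{N_{k,1}(n)}/\abs{N_k(n)}\to n_{k,1}/n_k=k e^{-\lambda}$; hence $3\,\abs{N_{k,1}(n)}/\abs{N_k(n)}+k\to k\,(3e^{-\lambda}+1)$ and the $k$-th summand tends to $\frac{\lambda^k e^{-\lambda}}{k!}\sqrt{k}\,\sqrt{3e^{-\lambda}+1}$, which is the generic term of the series in the statement. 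For the edge term, observe that $2e_{11}(G_n)=\abs{N_{1,1}(n)}$, the number of leaves whose only neighbour is itself a leaf, so Proposition~\ref{ER-deg} (with $n_{1,1}=\lambda e^{-2\lambda}$) controls $2e_{11}(G_n)/n$, contributing at most $2\lambda e^{-2\lambda}$. For the tail $k>m$ I would use the deterministic estimates $3\,\abs{N_{k,1}(n)}/\abs{N_k(n)}+k\le 4k$ (a vertex of degree $k$ has at most $k$ leaf neighbours) and
\[
  \frac1n\sum_{k>m}\abs{N_k(n)}\sqrt{k}\;\le\;\frac1n\sqrt{\Bigl(\sum_{k>m}\abs{N_k(n)}\Bigr)\Bigl(\sum_{k>m}k\,\abs{N_k(n)}\Bigr)}\;\le\;\frac{2\abs{E_n}}{n\sqrt{m}},
\]
using $\sum_k k\,\abs{N_k(n)}=2\abs{E_n}$; by Proposition~\ref{ER-edges}, $\abs{E_n}/n\to\lambda/2$ with high probability, so for large $n$ the tail contribution to $\energy{G_n}/n$ is at most $\frac{c_\lambda}{\sqrt{m}}$, with $c_\lambda$ depending only on $\lambda$ and uniformly in $n$.

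Collecting the three pieces, one obtains, with high probability and for every fixed $m\ge 2$,
\[
  \limsup_{n\to\infty}\frac{\energy{G_n}}{n}\;\le\;2\lambda e^{-2\lambda}+\sqrt{3e^{-\lambda}+1}\sum_{k=2}^{m}\frac{\lambda^k e^{-\lambda}}{k!}\sqrt{k}\;+\;\frac{c_\lambda}{\sqrt{m}},
\]
and letting $m\to\infty$ yields the claimed inequality. I expect the main obstacle to be precisely this last, tail, step: making rigorous that vertices of large degree contribute negligibly to the bound uniformly in $n$, so that $\limsup_n$ may be exchanged with the infinite sum over degree classes. Everything else is a direct transcription of the Barab\'asi--Albert argument, now fed with the Poisson-type degree asymptotics of Propositions~\ref{ER-deg} and~\ref{ER-edges}.
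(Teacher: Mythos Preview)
Your proposal is correct and follows essentially the same route as the paper: apply Theorem~\ref{TPG}, stratify inner vertices by degree and use Cauchy--Schwarz within each degree class (this is exactly Corollary~\ref{cor-BA-model}), pass the finite head to the limit via Proposition~\ref{ER-deg}, and control the tail by a Cauchy--Schwarz estimate combined with Proposition~\ref{ER-edges}. The only difference is cosmetic: the paper bounds the tail by $2\sqrt{\lambda^{m+1}/m!}$ (via the Taylor remainder for $\sum_{k\le m}k\,n_k$) whereas you obtain $c_\lambda/\sqrt{m}$ from $\sum_{k>m}\abs{N_k(n)}\le 2\abs{E_n}/m$; both vanish as $m\to\infty$, so either suffices.
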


\begin{proof}
Let
\[
    n_k = \frac{\lambda^ke^{-\lambda}}{k!}, \qquad n_{k,1} = \frac{\lambda^ke^{-2\lambda}}{(k-1)!}.
\]

Notice that the total number of vertices may be calculated by
\begin{equation}\label{eq-sum-nk}
    \sum_{k = 1}^n \abs{N_k(n)} = n,
\end{equation}
while the sum of degrees may be calculated by
\begin{equation}\label{eq-sum-deg}
    \sum_{k = 1}^n k \abs{N_k(n)} = 2\abs{E_n} = n\lambda + o(n),
\end{equation}
using the Proposition \ref{ER-edges}.

In the other hand, for any fixed \(m \in \N\),
\[
    \sum_{k = 1}^m k \,n_k = \lambda e^{-\lambda}\sum_{k = 0}^{m-1} \frac{\lambda^k}{k!} \geq \lambda - \frac{\lambda^{m+1}}{m!} + o(1),
\]
where the second inequality is result of Taylor's Theorem used in \(e^{\lambda}\),
\[
    e^{\lambda} = \sum_{k = 0}^{m-1} \frac{\lambda^k}{k!} + \frac{\lambda^m e^{\lambda'}}{m!} + o(1), \qquad \mbox{for some } \lambda' \in (0,\lambda).
\]
Comparing with the equation \eqref{eq-sum-deg}, we infer that
\[
    \frac{1}{n}\sum_{k = m+1}^n k\abs{N_k(n)} \leq \frac{\lambda^{m+1}}{m!} + o(1).
\]
From this equation, together with equation \eqref{eq-sum-nk}, by Cauchy-Schwarz we obtain that
\[
    \frac{1}{n}\sum_{k = m+1}^n \abs{N_k(n)}\sqrt{k} \leq \sqrt{\frac{\lambda^{m+1}}{m!}} + o(1).
\]

By the Corollary \ref{cor-BA-model}, we get that
\[
    \frac{\energy{G_n}}{n} \leq 2\frac{\abs{N_{1,1}(n)}}{n} + \sum_{k = 2}^n \frac{\abs{N_k(n)}}{n} \sqrt{3 \frac{\abs{N_{k,1}(n)}}{\abs{N_k(n)}} + k}
\]
Combining this equation with the previous calculations, for any fixed \(m \geq 2\) and large \(n\), we get
\begin{align*}
    \frac{1}{n}\energy{G_n} &\leq 2n_{1,1} + \sum_{k = 2}^m n_k\sqrt{3\frac{n_{k,1}}{n_k} + k} + o(1) + \frac{2}{n}\sum_{k = m+1}^n \abs{N_k(n)}\sqrt{k}\\
    &\leq 2n_{1,1} + \sum_{k = 2}^m n_k\sqrt{3\frac{n_{k,1}}{n_k} + k} + 2\sqrt{\frac{\lambda^{m+1}}{m!}} + o(1)
\end{align*}

Since \(m\) is fixed but arbitrary, we obtain the asymptotic bound
\begin{align*}
    \limsup_{n \to \infty} \frac{1}{n}\energy{G_n} &\leq 2n_{1,1} + \sum_{k = 2}^{\infty} n_k\sqrt{3\frac{n_{k,1}}{n_k} + k}\\
    &= 2\lambda e^{-2\lambda} + e^{-\lambda}\sqrt{3e^{-\lambda} + 1}\sum_{k = 2}^{\infty} \frac{\lambda^k\sqrt{k}}{k!}.
\end{align*}
    
\end{proof}

\begin{figure}[!ht]
    \centering

    \begin{subfigure}[t]{0.49\textwidth}
        \centering
        \includegraphics[width = 1\linewidth]{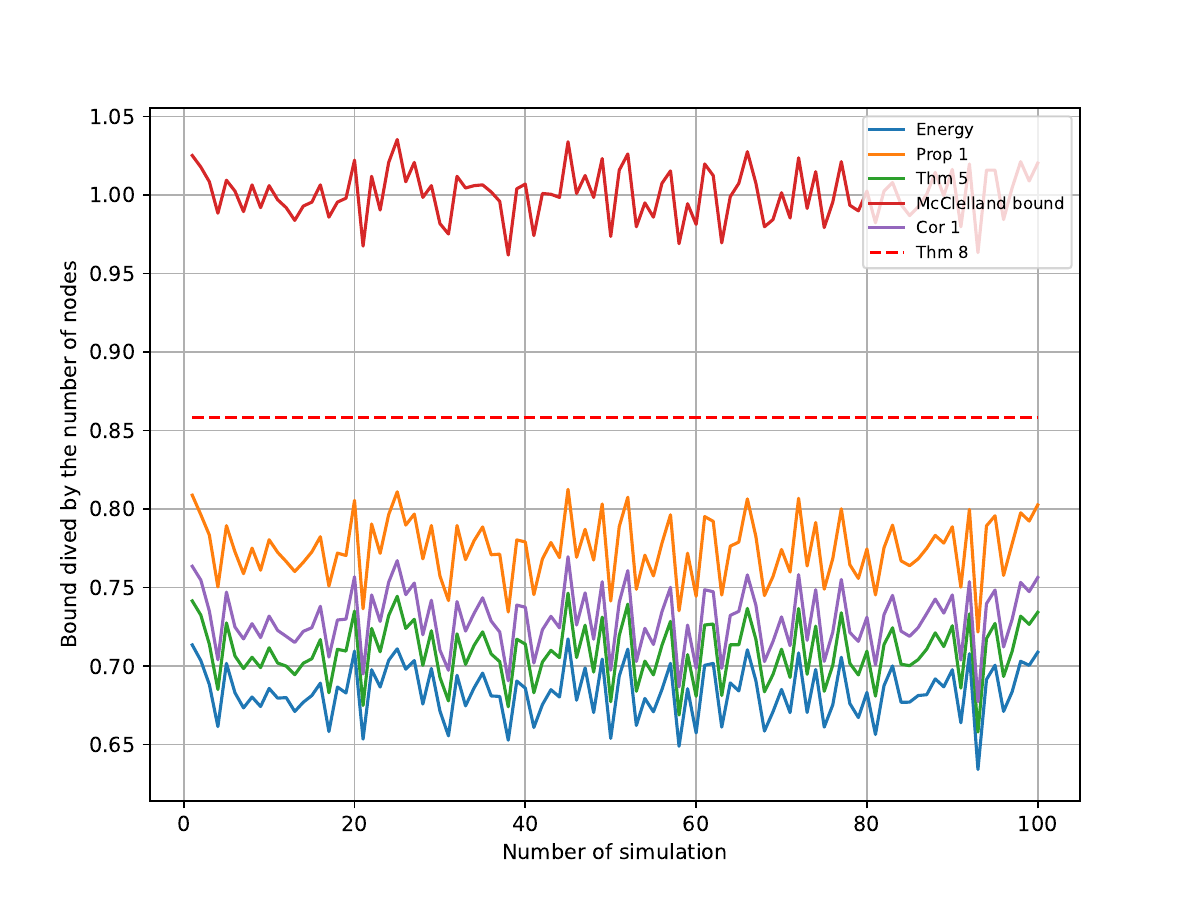}
        \caption{$\lambda = 1$}
    \end{subfigure}%
    ~
    \begin{subfigure}[t]{0.49\textwidth}
        \centering
        \includegraphics[width = 1\textwidth]{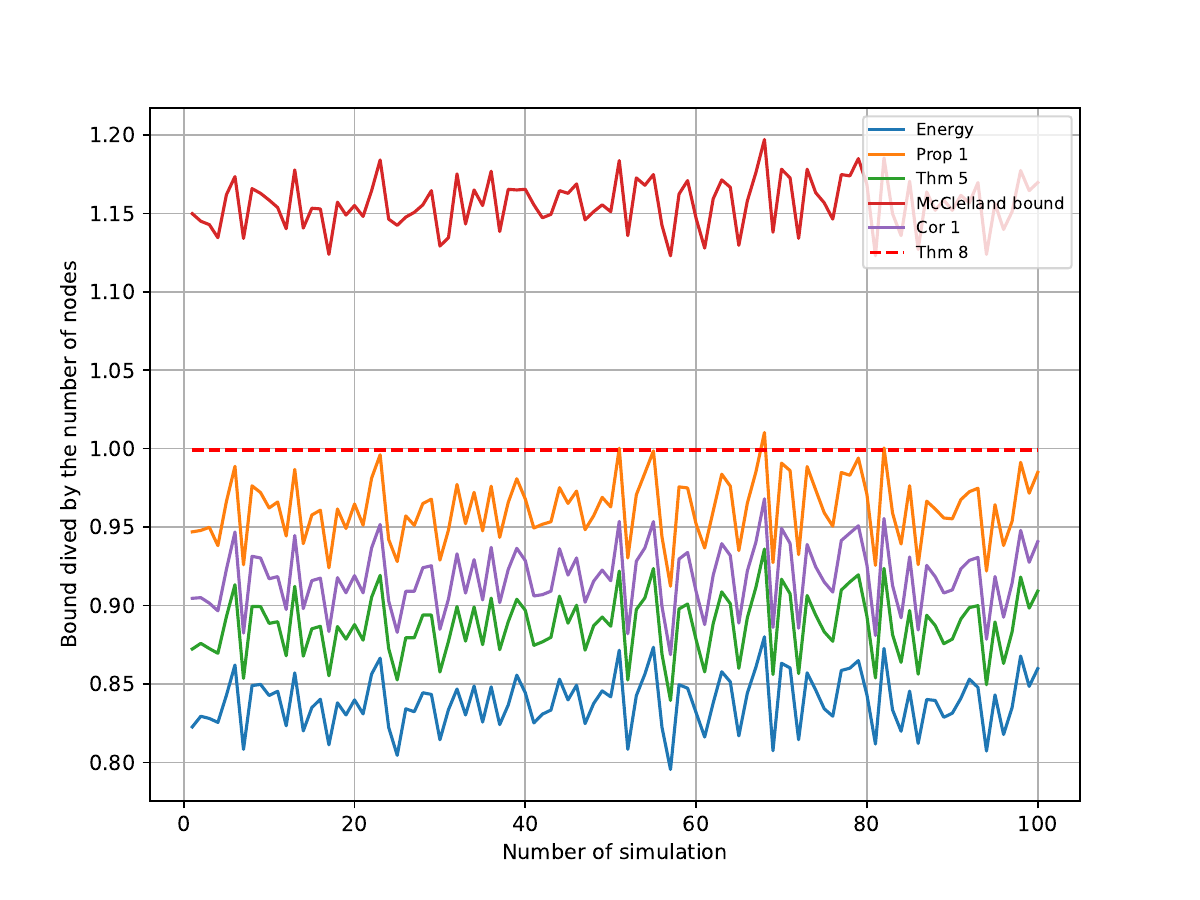}
        \caption{$\lambda = 4/3$}
    \end{subfigure}
    
    \begin{subfigure}[t]{0.49\textwidth}
        \centering
        \includegraphics[width = 1\textwidth]{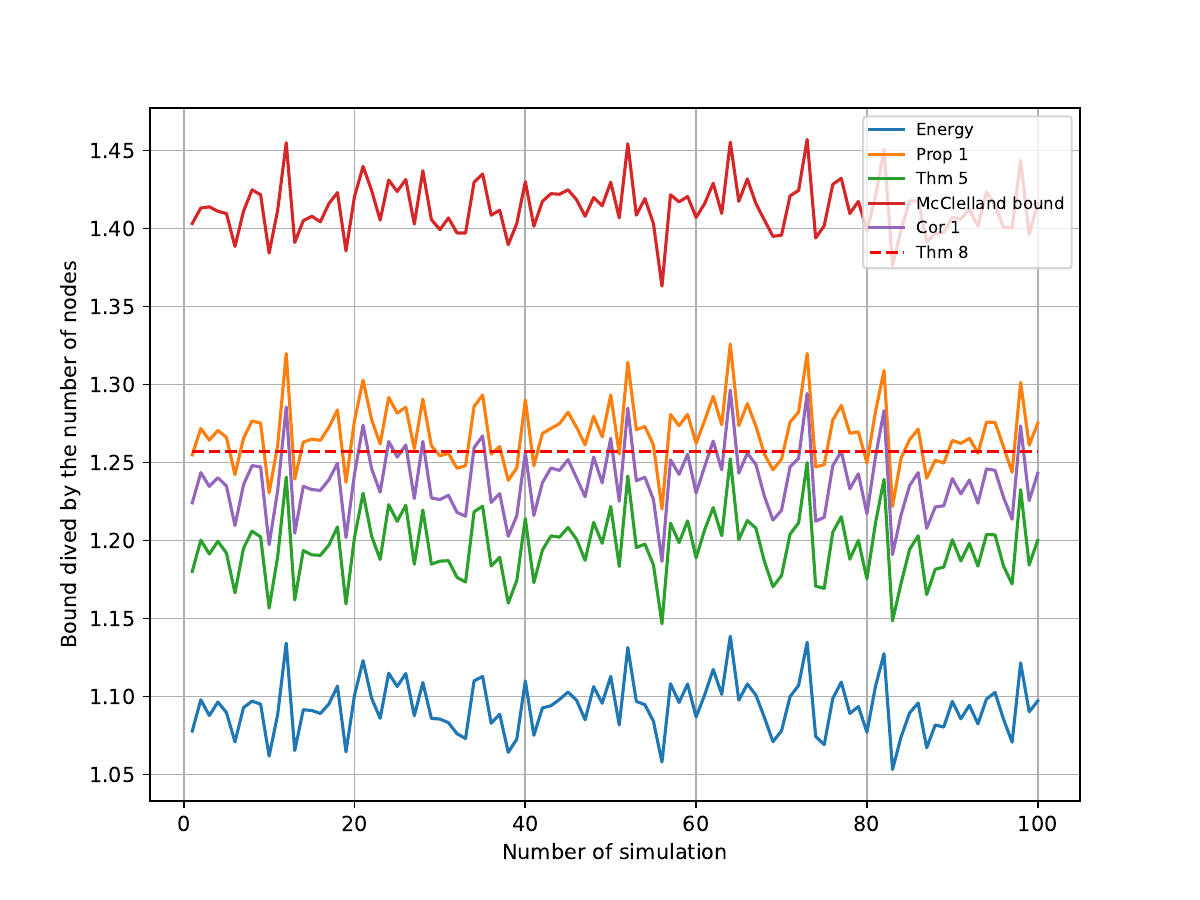}
        \caption{$\lambda = 2$}
    \end{subfigure}
    
    \caption{Energy/size for 200 random graphs of size \(n = 2000\) following the Erd\"os-Renyi model compared with the bounds from Proposition \ref{eq:AJ}, Theorem \ref{TPG}, McCleland's bound, Corollary \ref{global_bound} and the asymptotic bound from Theorem \ref{ER-bound1} for different values of $\lambda$.}
\end{figure}

As a consequence of the above result we can ensure that as $n$ aproaches infinity an Erd\"os-Renyi graph is hypoenergetic when taking $p_n \leq \frac{4}{3n}$, as the following result states.

\begin{theorem}\label{ER-hipoenergetic}
For \(\lambda \in (0,4/3]\), it is satisfied that
\[
    \limsup_{n \to \infty} \frac{\energy{G_n}}{n} < 1.
\]
\end{theorem}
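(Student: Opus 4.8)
By Theorem~\ref{ER-bound1} it suffices to prove that
\[
    \Phi(\lambda)\;:=\;2\lambda e^{-2\lambda}+e^{-\lambda}\sqrt{3e^{-\lambda}+1}\,\sum_{k=2}^{\infty}\frac{\lambda^{k}\sqrt{k}}{k!}\;<\;1
    \qquad\text{for all }\lambda\in(0,4/3],
\]
since then $\limsup_{n\to\infty}\energy{G_n}/n\le\Phi(\lambda)<1$ with high probability. The feature to keep in mind is that this inequality is nearly sharp: numerically $\Phi(4/3)\approx 0.999$, which is precisely why the threshold is $4/3$; consequently the estimates below have to be genuinely quantitative rather than soft.

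The plan is to write $\Phi=\Phi_{1}+\Phi_{2}\Phi_{3}$ with $\Phi_{1}(\lambda)=2\lambda e^{-2\lambda}$, $\Phi_{2}(\lambda)=\sqrt{3e^{-\lambda}+1}$ and $\Phi_{3}(\lambda)=e^{-\lambda}\sum_{k\ge 2}\lambda^{k}\sqrt{k}/k!$, and to exploit the monotonicity of each factor on $(0,4/3]$. Indeed $\Phi_{1}$ increases on $(0,1/2]$ and decreases afterwards, so $\Phi_{1}\le e^{-1}$ throughout; $\Phi_{2}$ is decreasing, so $\Phi_{2}\le 2$; and $\Phi_{3}(\lambda)=\E{\sqrt{X}\,\mathbbm 1_{X\ge 2}}$ for $X\sim\mathrm{Poisson}(\lambda)$, with $\Phi_{3}'(\lambda)=e^{-\lambda}\sum_{k\ge 2}\frac{\lambda^{k-1}\sqrt{k}\,(k-\lambda)}{k!}>0$ for $\lambda<2$, hence $\Phi_{3}$ is increasing on $(0,4/3]$. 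On $(0,1/2]$ this already closes the argument: $\Phi(\lambda)\le e^{-1}+2\,\Phi_{3}(1/2)$, and since the series defining $\Phi_{3}(1/2)$ is easily bounded (e.g.\ using $\sqrt{k}\le k$, so that it is at most $\tfrac12(e^{1/2}-1)$), the right-hand side is well below $1$.

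On $[1/2,4/3]$ one must work harder. Here I would replace $\Phi_{3}$ by an explicit elementary upper bound, isolating the terms $k=2,3$ and applying Cauchy--Schwarz to the tail: with $n_{j}=\tfrac{\lambda^{j}}{j!}e^{-\lambda}$,
\[
    \Phi_{3}(\lambda)\;\le\;\sqrt{2}\,n_{2}+\sqrt{3}\,n_{3}+\sqrt{\Big(\lambda-\lambda e^{-\lambda}-\lambda^{2}e^{-\lambda}-\tfrac{\lambda^{3}}{2}e^{-\lambda}\Big)\Big(1-e^{-\lambda}-\lambda e^{-\lambda}-\tfrac{\lambda^{2}}{2}e^{-\lambda}-\tfrac{\lambda^{3}}{6}e^{-\lambda}\Big)}\;=:\;\widetilde\Phi_{3}(\lambda),
\]
the two factors under the last square root being $\E{X\,\mathbbm 1_{X\ge 4}}$ and $\Pp{X\ge 4}$; a quick check shows $\widetilde\Phi_{3}$ agrees with $\Phi_{3}$ to within about $10^{-3}$ near $\lambda=4/3$, which is the precision needed. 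Since $n_{2},n_{3}$ and both factors $\E{X\mathbbm 1_{X\ge 4}}$, $\Pp{X\ge 4}$ are increasing in $\lambda$, $\widetilde\Phi_{3}$ is increasing on $(0,2)$, while $\Phi_{2}$ is decreasing. Setting $\widetilde\Phi:=\Phi_{1}+\Phi_{2}\widetilde\Phi_{3}\ge\Phi$, it remains to verify $\widetilde\Phi<1$ on $[1/2,4/3]$: this can be done either by checking that $\widetilde\Phi$ is increasing there, via an elementary estimate of $\widetilde\Phi'$ that involves no infinite series, so that $\widetilde\Phi\le\widetilde\Phi(4/3)<1$; or, more mechanically, by partitioning $[1/2,4/3]$ into finitely many intervals $[a_{i-1},a_{i}]$ (with $1/2$ a node, so that $\Phi_{1}$ is monotone on each) and using $\Phi\le\widetilde\Phi\le\max_{[a_{i-1},a_{i}]}\Phi_{1}+\Phi_{2}(a_{i-1})\widetilde\Phi_{3}(a_{i})$ on each subinterval, a finite list of explicit numerical inequalities that holds once the mesh is small enough.

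The main obstacle is exactly this near-sharpness at $\lambda=4/3$: no crude estimate can succeed, so the crux is to pin down $\sum_{k\ge 2}\lambda^{k}\sqrt{k}/k!$ --- equivalently $\E{\sqrt{X}\,\mathbbm 1_{X\ge 2}}$ for a $\mathrm{Poisson}(\lambda)$ variable $X$ --- accurately and uniformly over $\lambda\in(0,4/3]$, and then to dispatch the resulting explicit but genuinely tight inequality. By contrast, the reduction to $\Phi<1$, the monotonicity statements, and the case $\lambda\le 1/2$ are all routine.
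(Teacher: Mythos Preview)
Your reduction to the inequality $\Phi(\lambda)<1$ is the right starting point and the overall strategy is sound, but the route you take from there differs from the paper's. The paper proves directly that $f=\Phi$ is \emph{increasing} on $(0,4/3]$: writing
\[
    f'(\lambda)=g(\lambda)+\sum_{k\ge5}h_k(\lambda),\qquad h_k(\lambda)=\frac{\sqrt{k}\,e^{-2\lambda}\bigl(2e^\lambda(k-\lambda)+6k-9\lambda\bigr)}{2\,k!\,\sqrt{3e^{-\lambda}+1}},
\]
the tail terms $h_k$ are manifestly positive for $k\ge5$ and $\lambda\le3$, while the finite piece $g$ (absorbing $(2\lambda e^{-2\lambda})'$ together with $k=2,3,4$) is checked by hand. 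Monotonicity then reduces everything to the single evaluation $f(4/3)$, carried out by truncating the series at $n=13$ with the explicit tail bound $|f-f_n|\le\sqrt{3e^{-\lambda}+1}\,\lambda^n/n!$, giving $f(4/3)\approx0.99911$.

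Your alternative replaces the infinite series by the elementary Cauchy--Schwarz bound $\widetilde\Phi_3$ and proposes to verify $\widetilde\Phi<1$ either by monotonicity or by interval partition. The partition option (b) is certainly viable in principle: one computes $\widetilde\Phi(4/3)\approx0.9994<1$, and the Cauchy--Schwarz excess is even smaller for $\lambda<4/3$, so a sufficiently fine mesh will succeed. Option (a) is less secure than you suggest: since $\Phi_1$ and $\Phi_2$ are \emph{both decreasing} on $[1/2,4/3]$, monotonicity of $\widetilde\Phi=\Phi_1+\Phi_2\widetilde\Phi_3$ does not follow from the pieces and would require a derivative computation of roughly the same nature as the paper's---so this branch does not obviously save work. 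One small slip on the $(0,1/2]$ part: the quantity $\tfrac12(e^{1/2}-1)$ bounds the \emph{series} $\sum_{k\ge2}(1/2)^k\sqrt{k}/k!$, not $\Phi_3(1/2)$ itself; with the missing factor $e^{-1/2}$ restored one gets $e^{-1}+2\Phi_3(1/2)\le e^{-1}+(1-e^{-1/2})\approx0.76$, which is indeed well below $1$, whereas without it the bound would read $\approx1.02$.
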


\begin{proof}
Let $f : (0,\infty) \to \R$ given by
\[
    f(\lambda) = 2\lambda e^{-2\lambda} + e^{-\lambda}\sqrt{3e^{-\lambda} + 1}\sum_{k = 2}^{\infty} \frac{\lambda^k\sqrt{k}}{k!}.
\]
We note first that $f$ is increasing for $\lambda \in [0,4/3]$. Indeed,
\[
    f'(\lambda) = g(\lambda) + \sum_{k = 5}^{\infty} h_k(\lambda),
\]
where, for every \(k \geq 2\),
\[
    g(\lambda) := (2 - 4\lambda)e^{-2\lambda} + \sum_{k = 2}^4 h_k(\lambda), \qquad h_k(\lambda) := \frac{\sqrt{k}e^{-2\lambda}\left(2e^{\lambda}(k - \lambda) + 6k - 9\lambda\right)}{2k!\sqrt{3e^{-\lambda} + 1}}.
\]
It is clear that \(h_k > 0\) whenever \(2e^{\lambda}(k - \lambda) + 6k - 9\lambda > 0\), in particular, when $k\geq 5$ we can take  \(\lambda \in (0,3]\). In addition, after some calculations one can verify that \(g(\lambda) > 0\) for \(\lambda \in (0,4/3]\). In consequence \(f\) is increasing in \((0,4/3]\).

Thus it is enough to check if $f(4/3)\leq 1$.  To bound $f(4/3)$ we can consider the partial sum \[
    f_n(\lambda) = 2\lambda e^{-2\lambda} + e^{-\lambda}\sqrt{3e^{-\lambda} + 1}\sum_{k = 2}^{n} \frac{\lambda^k\sqrt{k}}{k!}.
\]
and verify that $|f(\lambda)-f_n(\lambda)|\leq\sqrt{3e^{-\lambda} + 1}\lambda^n/n!$, which for $\lambda=4/3$ and $n = 13$ yields and error of $\delta \approx 9.04577\cdot 10^{-9}$, which is a small as desired as $n\to \infty$.
In conclusion, we obtain that \(f(4/3) \approx 0.99911\) and therefore, for every \(\lambda \in (0,4/3]\) it is satisfied that
\[
    \limsup_{n \to \infty} \frac{\energy{G_n}}{n} \leq f(\lambda) < 1.
\]

\end{proof}

We finally present some general bounds, which are particularly accurate when $\lambda$ is small.

\begin{proposition}\label{ER-bound2}
 Let 
\begin{equation*}
   f(\lambda) := 2\lambda e^{-2\lambda} + e^{-\lambda}\sqrt{3e^{-\lambda} + 1}\sum_{k = 2}^{\infty} \frac{\lambda^k\sqrt{k}}{k!}
\end{equation*}

Then
\[
    f(\lambda) \leq 2\lambda e^{-2\lambda} + \frac{1}{\sqrt{8}}\sqrt{3e^{-\lambda} + 1}\left((\lambda + 2) - e^{-\lambda}(3\lambda + 2)\right), \qquad \lambda \in (0,\infty)
\]
and
\[
    f(\lambda) \leq \lambda, \qquad \lambda \in (4/3,\infty).
\]
\end{proposition}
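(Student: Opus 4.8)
Both estimates come from one idea: in the series $\sum_{k\ge 2}\lambda^k\sqrt{k}/k!$ replace the factor $\sqrt{k}$ by a linear function of $k$ that dominates $\sqrt{k}$ on the range $k\ge 2$, and then evaluate the resulting elementary exponential series in closed form; different choices of the linear majorant yield the two bounds. Throughout I use the identities $\sum_{k\ge 2}\lambda^k/(k-1)!=\lambda(e^{\lambda}-1)$, $\sum_{k\ge 2}\lambda^k/k!=e^{\lambda}-1-\lambda$, and $\sum_{k\ge 1}k\,\lambda^k e^{-\lambda}/k!=\lambda$ (the mean of a Poisson($\lambda$) law), whose $k=1$ term equals $\lambda e^{-\lambda}$.

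For the first inequality I would use the tangent bound $\sqrt{k}\le\frac{k+2}{2\sqrt{2}}$, valid for all $k\ge 0$ since $(k+2)^2-8k=(k-2)^2\ge 0$, with equality at $k=2$. Substituting,
\[
\sum_{k\ge 2}\frac{\lambda^k\sqrt{k}}{k!}\le\frac{1}{2\sqrt{2}}\Bigl(\sum_{k\ge 2}\frac{\lambda^k}{(k-1)!}+2\sum_{k\ge 2}\frac{\lambda^k}{k!}\Bigr)=\frac{(\lambda+2)e^{\lambda}-3\lambda-2}{2\sqrt{2}}.
\]
Multiplying by $e^{-\lambda}\sqrt{3e^{-\lambda}+1}$, using $2\sqrt{2}=\sqrt{8}$, and adding $2\lambda e^{-2\lambda}$ to both sides gives exactly the first claim.

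For the second inequality I would instead use the cruder bound $\sqrt{k}\le k/\sqrt{2}$, valid for $k\ge 2$ (equivalently $k^2\ge 2k$); its advantage is that it produces a common factor $\lambda$. Indeed $e^{-\lambda}\sum_{k\ge 2}\lambda^k\sqrt{k}/k!\le\frac{1}{\sqrt{2}}\sum_{k\ge 2}k\,\lambda^k e^{-\lambda}/k!=\frac{\lambda(1-e^{-\lambda})}{\sqrt{2}}$, so $f(\lambda)\le\lambda\bigl(2e^{-2\lambda}+\tfrac{1}{\sqrt{2}}(1-e^{-\lambda})\sqrt{3e^{-\lambda}+1}\bigr)$. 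Writing $t=e^{-\lambda}$, it therefore suffices to show $2t^2+\tfrac{1}{\sqrt{2}}(1-t)\sqrt{3t+1}\le 1$ for $\lambda>4/3$, i.e.\ for $t\in(0,e^{-4/3}]$. Since $e^{-4/3}<1/\sqrt{2}$ we have $1-2t^2>0$ on this range, so the inequality is equivalent (squaring both nonnegative sides) to $(1-t)^2(3t+1)\le 2(1-2t^2)^2$, which expands to $8t^4-3t^3-3t^2-t+1\ge 0$; on $t\in[0,e^{-4/3}]\subset[0,0.27]$ this follows from the crude estimate $8t^4-3t^3-3t^2-t+1\ge 1-t-3t^2-3t^3>0$.

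The exponential-series manipulations and the two bounds on $\sqrt{k}$ are immediate; the only step requiring care is the last one, where one must check that $\lambda>4/3$ places $t=e^{-\lambda}$ inside the region $t<1/\sqrt{2}$ where the squaring is legitimate and the quartic is nonnegative (this is exactly where the constant $4/3$ enters). I expect this final elementary estimate to be the only real obstacle; alternatively one may verify that $t\mapsto 2t^2+\tfrac{1}{\sqrt{2}}(1-t)\sqrt{3t+1}$ is increasing on $(0,\infty)$ and evaluate at $t=e^{-4/3}$.
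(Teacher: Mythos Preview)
Your proof of the first inequality is identical to the paper's: both use the tangent line $\sqrt{k}\le (k+2)/(2\sqrt{2})$ (the paper phrases it as the first-order Taylor expansion of $\sqrt{\cdot}$ at $2$) and then sum the resulting exponential series.

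For the second inequality, your argument is correct but genuinely different from the paper's. The paper does \emph{not} introduce a second majorant; instead it sets $g(\lambda)=(\text{first bound})-\lambda$ and proves $g'(\lambda)\le 0$ for $\lambda>4/3$ by splitting $g'$ into three pieces $g_1+g_2+g_3-1$ and bounding each crudely. Your route---replacing $\sqrt{k}$ by $k/\sqrt{2}$ so that a factor of $\lambda$ comes out, substituting $t=e^{-\lambda}$, and reducing to the quartic $8t^4-3t^3-3t^2-t+1\ge 0$ on $[0,e^{-4/3}]$---is more elementary and entirely self-contained: it avoids differentiating a composite involving $\sqrt{3e^{-\lambda}+1}$ and needs no numerical evaluation at the endpoint. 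The paper's approach, on the other hand, has the conceptual advantage of deriving the second bound directly from the first, so only one majorant for $\sqrt{k}$ is ever used. Both arguments hinge on the same threshold $e^{-4/3}\approx 0.264$, and your check that $1-t-3t^2-3t^3>0$ there is correct (the expression is decreasing in $t$ and positive at $t=0.27$).
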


\begin{proof}
Using Taylor expansion around \(2\) for $\sqrt{\cdot}$ function, we notice that
\[
    \sqrt{x} \leq \frac{1}{\sqrt{8}} x + \frac{1}{\sqrt{2}}, \qquad \mbox{for } x \geq 2.
\]
Then,
\begin{align*}
    \sum_{k = 2}^{\infty} \frac{\lambda^k\sqrt{k}}{k!} &\leq \frac{1}{\sqrt{8}}\left(\lambda\sum_{k = 1}^{\infty} \frac{\lambda^k}{k!} + 2\sum_{k = 2}^{\infty} \frac{\lambda^k}{k!}\right)\\
    &= \frac{\lambda(e^{\lambda} - 1) + 2(e^{\lambda} - \lambda - 1)}{\sqrt{8}}\\
    &= \frac{(\lambda + 2)e^{\lambda} - (3\lambda + 2)}{\sqrt{8}}.
\end{align*}

Therefore, for \(\lambda \in (0,\infty)\),
\[
    f(\lambda) \leq 2\lambda e^{-2\lambda} + \frac{1}{\sqrt{8}}\sqrt{3e^{-\lambda} + 1}\left(\lambda + 2 - e^{-\lambda}(3\lambda + 2)\right).
\]

Let \(g : \R_+ \to \R\) be the function given by
\[
    g(\lambda) = 2\lambda e^{-2\lambda} + \frac{1}{\sqrt{8}}\sqrt{3e^{-\lambda} + 1}\left(\lambda + 2 - e^{-\lambda}(3\lambda + 2)\right) - \lambda.
\]
We obtain that
\[
    g'(\lambda) = \frac{2\sqrt{2}e^{\lambda} + \sqrt{2}(3\lambda - 2) + 27\sqrt{2}\lambda e^{-\lambda} - 16\sqrt{3e^{-\lambda} + 1}(\lambda - 1)}{8e^{\lambda}\sqrt{3e^{-\lambda} + 1}} - 1.
\]
By writing $g'(\lambda)=g_1(\lambda)+ g_2(\lambda)+g_3(\lambda)-1$
where, for $\lambda\geq 4/3,$ $$g_1(\lambda)=\frac{2\sqrt{2}e^{\lambda}}{{8e^{\lambda}\sqrt{3e^{-\lambda} + 1}}}\leq \frac{1}{2\sqrt{2}},\quad  g_2(\lambda)=\frac{27\sqrt{2}\lambda e^{-\lambda}}{8e^{\lambda}\sqrt{3e^{-\lambda} + 1}}\leq \frac{1}{2}, $$
and
$$g_3(\lambda)=\frac{\sqrt{2}(3\lambda - 2) - 16\sqrt{3e^{-\lambda} + 1}(\lambda - 1)}{8e^{\lambda}\sqrt{3e^{-\lambda} + 1}} \leq \frac{\sqrt{2}(3\lambda - 2) - 16(\lambda - 1)}{8e^{\lambda}\sqrt{3e^{-\lambda} + 1}}\leq 0$$

One sees that $g'(\lambda)\leq0$ for \(\lambda>4/3\). 

\end{proof}

\begin{figure}[!ht]
    \centering
    \includegraphics[width = 0.7\linewidth]{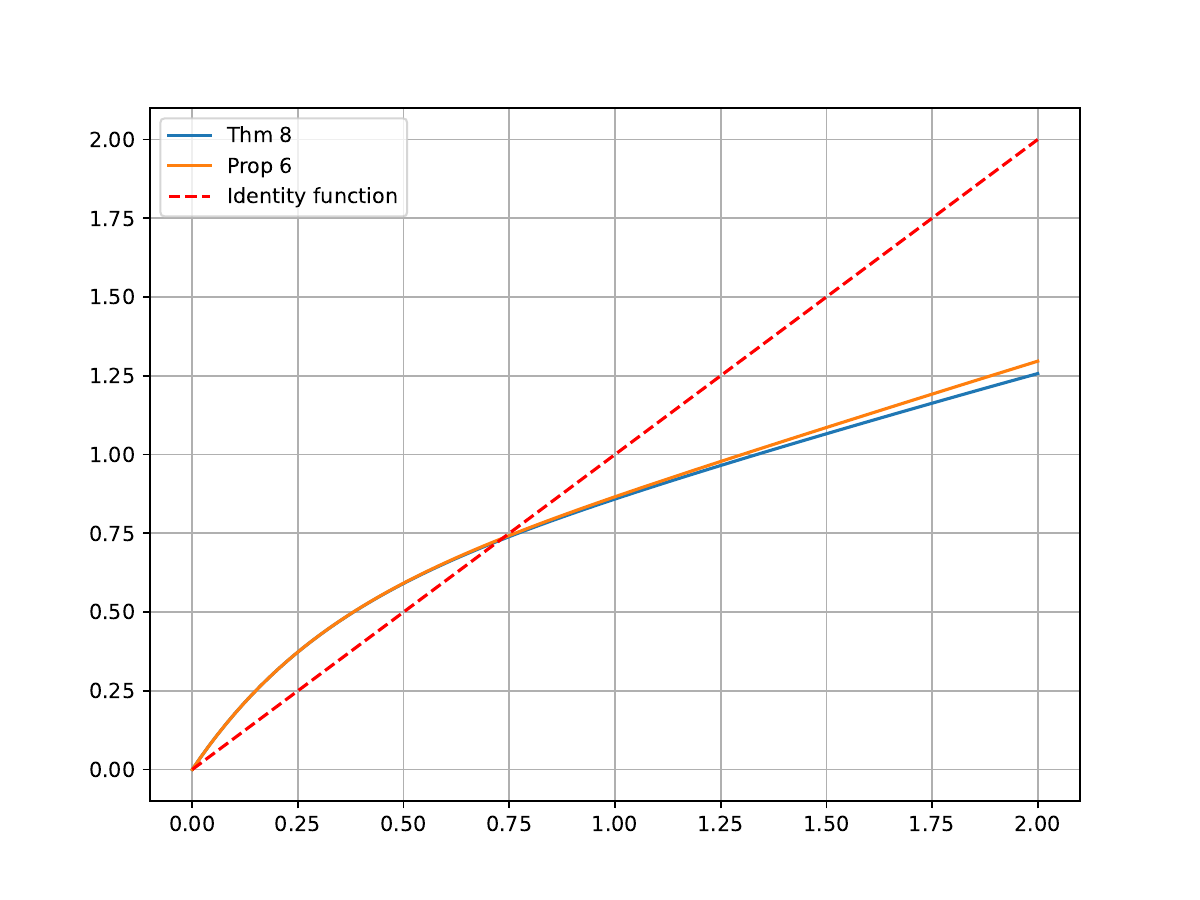}
    \caption{Bounds for the energy/size for a typical random graph chosen by Erd\"os-Renyi model when the number of nodes tends to infinity given by Theorem \ref{ER-bound1} and Proposition \ref{ER-bound2}.}
\end{figure}

\newpage

\end{document}